\pgfplotsset{compat=newest}
\definecolor{ggreen}{RGB}{79,145,19}
\definecolor{bblue}{RGB}{5,148,230}
\definecolor{vviol}{RGB}{168,13,219}
\definecolor{rred}{RGB}{219,54,7}
\definecolor{fillblue}{RGB}{57,199,220}
\definecolor{yyellow}{RGB}{199,137,58}
\declaretheoremstyle[spaceabove=6pt, spacebelow=9pt, postheadspace=1em, qed=\qedsymbol]{SpringerNat}
\theoremstyle{SpringerNat}
\newtheorem{theorem}{Theorem}[section]
\newtheorem{lemma}[theorem]{Lemma}
\newtheorem{proposition}[theorem]{Proposition}
\newtheorem{definition}[theorem]{Definition}
\newtheorem{remark}[theorem]{Remark}
\newtheorem{example}[theorem]{Example}
\numberwithin{equation}{section}
\DeclareMathOperator{\Res}{Res}
\DeclareMathOperator{\Int}{Int}
\DeclareMathOperator{\Ext}{Ext}
\DeclareMathOperator{\argg}{arg}
\newcommand{\C}{
	\ensuremath{\mathbb{C}}
}
\newcommand{\Hol}{
	\ensuremath{\mathcal{O}}
}
\newcommand{\N}{
	\ensuremath{\mathbb{N}}
}
\newcommand{\R}[1][]{
	\ensuremath{\mathbb{R}^#1}
}
\newcommand{\proofpart}[2]{%
	\par
	\addvspace{\medskipamount}%
	\noindent\textbf{Step #1:} #2\par\nobreak
	\addvspace{\smallskipamount}%
	\@afterheading
}
\begin{document}

\title{Separatrix configurations in holomorphic flows}

\author*[1]{\fnm{Nicolas} \sur{Kainz}} \email{nicolas.kainz@uni-ulm.de}
\author[1]{\fnm{Dirk} \sur{Lebiedz}} \email{dirk.lebiedz@uni-ulm.de}

\affil[1]{\orgdiv{Institute of Numerical Mathematics}, \orgname{Ulm University}, \country{Germany}}

\abstract{

    We investigate properties of boundary orbits (separatrices) of canonical regions (basins/neighbourhoods of equilibria) in holomorphic flows with real-valued time. We establish the continuity of transit times along these boundary orbits and classify possible path components of the boundary of flow-invariant domains. Thus, we provide central tools for topological and geometric constructions aimed at examining the role of blow-up scenarios in separatrix configurations of basins of simple equilibria and global elliptic sectors:
	
	First, we prove that the separatrices of basins of centers is entirely composed of double-sided separatrices with a blow-up in finite positive \textit{and} finite negative time.
   
    Second, we show that the separatrices of node and focus basins (sinks and sources) exhibit a finite-time blow-up in the same time direction in which the orbits within the basin tend towards the equilibrium. Additionally, we propose a counterexample to the claim in Theorem 4.3 (3) in [\textit{The structure of sectors of zeros of entire flows}, K. Broughan (2003)], demonstrating that a blow-up does not \textit{necessarily} have to occur in \textit{both} time directions.
    
    Third, we describe the boundary structure of global elliptic sectors. It consists of the multiple equilibrium, one incoming and one outgoing separatrix attached to it, and at most countably many double-sided separatrices.
    
}

\keywords{complex analytic vector field, holomorphic flow, global phase space topology, separatrix, finite-time blow-up, transit time, basin, global elliptic sector.}

\pacs[MSC Classification]{Primary: 37F75. Secondary: 37F20, 34C37.}

\maketitle

\section{Introduction}

The research on holomorphic vector fields, i.e. real time holomorphic dynamical systems of the form
\begin{align}\label{eq:planarODE}
	\dot{x}=\frac{\mathrm{d}x}{\mathrm{d}t}=F(x),\quad x\in\Omega,\;t\in\mathbb{R}
\end{align}
with $F\in\Hol(\Omega)$, $\Omega\subset\C\,$\footnote{In most cases, we assume $\Omega = \C$.}, is an active and rapidly evolving area in mathematics, attracting significant attention in recent years. Especially the phase space geometry is an important subject, cf. \cite{garijo2006local, brickman1977conformal, broughan2003structure, kainz2024local, kainz2026geometry, kainz2023planar, andronov1973qualitativetheory, lebiedz2025sensitivities, schleich2018equivalent, broughan2004riemann}. By analyzing the global dynamics of complex analytic vector fields on Riemann surfaces with either real or complex time, one can obtain information on the global phase portrait of \eqref{eq:planarODE}, cf. \cite{alvarez2017dynamics1, alvarez2017dynamics2, alvarez2024geometry, alvarez2021symmetries, lebiedz2025sensitivities}. In this context, we are particularly interested in the boundaries of \textit{canonical regions} in the phase space, which have already been introduced for general smooth planar systems by Markus \cite[Section II.3]{markus1954global} and Neumann \cite[pp. 73--75]{neumann1975classification}. Roughly speaking, canonical regions are flow-invariant domains in which orbits behave similarly from a geometrical viewpoint, and whose boundaries separate orbits with qualitatively different behavior. Therefore, these boundaries form the structural backbone of the global phase portrait. The orbits on these boundaries are typically referred to as the \textit{separatrices} of \eqref{eq:planarODE}, cf. \cite[p. 129]{markus1954global}, \cite[p. 74]{neumann1975classification}, and \cite{alvarez2017dynamics1, dumortier2006qualitative, perko2001differential}. However, a mathematical definition of "separatrix" lacks consistency in the literature. In the holomorphic case, Broughan relates the term "separatrix" to the occurrence of a finite-time blow-up, cf. \cite{broughan2003structure,broughan2003holomorphic}.\footnote{A trajectory of \eqref{eq:planarODE} has a blow-up in positive (negative) time, if it tends to infinity in finite positive (negative) time.} In this paper we want to establish whether this particular definition of a separatrix is suitable and appropriate for systems of type \eqref{eq:planarODE}.

Each equilibrium possesses a maximal "region of influence", forming a canonical region in the sense of Markus and Neumann. In \cite{kainz2026geometry}, we analyzed the geometry of these specific regions, which correspond to basins of simple equilibria and global elliptic sectors, and established several topological properties. In general, each equilibrium of \eqref{eq:planarODE} can be classified into one of the following categories:
\begin{itemize}
    \item[(i)] A \textbf{center} (simple equilibrium), where all orbits in a neighborhood are closed periodic orbits enclosing the center.
    \item[(ii)] A \textbf{focus} or \textbf{node} (simple equilibrium), which is attracting or repelling (sink or source) such that all nearby orbits tend to the equilibrium either in positive (attracting) or negative (repelling) time.
    \item[(iii)] An equilibrium with order $m\in\mathbb{N}\setminus\{1\}$, possessing a \textbf{finite elliptic decomposition} of order $2m-2$. This geometric structure is defined in \cite[Definition 4.1]{kainz2024local} and illustrated in section 4.3.
\end{itemize}
We showed that the boundary orbits of these basins and global elliptic sectors are always unbounded, cf. \cite{kainz2026geometry}. In \cite{broughan2003structure}, Broughan even claims that all these boundary orbits blow up in finite time. However, his arguments contain several gaps, which we aim to close by providing complete and detailed proofs of the results in \cite{broughan2003structure}.

\newpage

The following outline of this paper illustrates how these gaps are addressed in detail.

First, we show the continuity of transit times on the boundary of flow-invariant domains, cf. Proposition \ref{prop:boundary_approximation}. This serves as the central tool for relating the time parametrization of boundary orbits to that of orbits within the basin and the global elliptic sector, respectively. Moreover, we classify the possible types of boundary components, cf. Proposition \ref{prop:countable_orbits}.

Using these Propositions, we prove that the boundaries of centers always consist of double-sided separatrices, cf. Theorem \ref{thm:separatrices_center}. Furthermore, we demonstrate that the boundary orbits of nodes and foci blow up in finite time in the same time direction in which the orbits within the basin approach the equilibrium, cf. Theorem \ref{thm:separatrices_node}. In addition, we present a counterexample to \cite[Theorem 4.3 (3)]{broughan2003structure}, showing that boundary orbits of nodes and foci need not blow up in finite time in \textit{both} time directions, cf. Example \ref{ex:conterexample_nodes}. We further establish that the boundaries of global elliptic sectors are formed by the multiple\footnote{We call an equilibrium a \textit{multiple equilibrium}, if it is not simple, i.e. if its order is at least $2$.} equilibrium, one incoming and one outgoing separatrix attached to it, and at most countably many double-sided separatrices, cf. Theorem \ref{thm:separatrices_sectors}. To support the technical arguments in these proofs, we provide several illustrative figures that highlight the geometric structure. Finally, we present two examples to illustrate our results.

We provide necessary contextual summaries and technical preliminaries throughout this paper. In order to keep track of the central achievements of this work, we highlight three Theorems as our main contribution:

\begin{enumerate}
  \item \textbf{Theorem \ref{thm:separatrices_center}.}\quad A center has a separatrix configuration consisting of at most countably many double-sided separatrices whose total transit time is bounded by the period of the center.
  \item \textbf{Theorem \ref{thm:separatrices_node}.}\quad A node or focus has a separatrix configuration consisting of countably many path components, each consisting of equilibria and separatrices whose qualitative type (positive or negative) depends on the stability of the equilibrium.
  \item \textbf{Theorem \ref{thm:separatrices_sectors}.}\quad A global elliptic sector has a separatrix configuration consisting of the multiple equilibrium, two separatrices of opposite type (one positive and the other negative), and at most countably many additional double-sided separatrices.
\end{enumerate}

\section{Basic definitions and notations}

For clarity and consistency, we recall basic definitions and notations that will be used throughout this work in the context of dynamical systems and related areas.

A domain is an open and connected set. The connected components of a topological space $\Omega$ are called the \textit{components} of $\Omega$, while the path-connected components of $\Omega$ are referred to as its \textit{path components}, cf. \cite[§25]{munkres2000topology}.

A \textit{trajectory} or \textit{orbit} through $x_0 \in \Omega \subset \C$ corresponding to \eqref{eq:planarODE} is defined as the maximal phase curve $\Gamma(x_0) := x(I)$, where $x$ denotes the unique solution of \eqref{eq:planarODE} with initial condition $x(0)=x_0$ and $I=I(x_0)\subset\R{}$ is its maximum interval of existence. We distinguish the forward and backward parts of the orbit -- the positive and negative semi-orbits -- by $\Gamma_+(x_0):=x(I\cap[0,\infty))$ and $\Gamma_-(x_0):=x(I\cap(-\infty,0])$. The orbit can be parametrized by the flow $\Phi(t,x_0):=x(t)$ for $t \in I$, leading to $\Gamma(x_0)=\Phi(I(x_0),x_0)$. The set of all equilibria of \eqref{eq:planarODE} is $F^{-1}(\{0\})$. By the Identity Theorem, if $F\in\Hol(\Omega)$ and $F\not\equiv 0$, then $F^{-1}(\{0\})$ is a discrete set and has no accumulation points.

Furthermore, if $x_0\in\Omega\subset\C{}$ is an initial value, $\Gamma=\Gamma(x_0)\subset\C{}$ the orbit of \eqref{eq:planarODE} through $x_0$, and $I=I(x_0)$ the maximum interval of existence, then we define the positive (negative) \textit{limit set} as
\begin{align*}
	\omega_{+(-)}(\Gamma):=\left\{v\in\C{}:\exists\,(t_k)_{k\in\N{}}\subset I\text{ with }t_k\overset{k\to\infty}{\longrightarrow}(-)\infty\text{ and }\Phi(t_k,x_0)\overset{k\to\infty}{\longrightarrow}v\right\}\text{.}
\end{align*}
This set does not depend on the initial value, i.e. $\omega_\pm(\Gamma(\tilde{x}_0))=\omega_\pm(\Gamma(x_0))$ for all $\tilde{x}_0\in\Gamma(x_0)$. Additionally, if $I(x_0)$ is bounded from above (below), then we have $\omega_{+(-)}(\Gamma(x_0))=\emptyset$, cf. cf. \cite[\S4]{andronov1973qualitativetheory}.

The Jordan curve Theorem, cf. \cite[Theorem 63.4]{munkres2000topology}, will be used throughout in this paper: If $\Gamma\subset\C{}$ is a closed Jordan curve, we denote the two components resulting from the Jordan curve Theorem by $\Int(\Gamma)$ (the bounded interior of $\Gamma$) and $\Ext(\Gamma)$ (the unbounded exterior of $\Gamma$). If the closed Jordan curve $\Gamma$ lies in a simply connected domain $\Omega\subset\C{}$, then $\Int(\Gamma)\subset\Omega$.

If $\Gamma\subset\C{}$ is an arbitrary curve and $a,b\in\Gamma$, then $\Gamma(a,b)$ is the curve piece of $\Gamma$ from $a$ to $b$. The nonnegative real number $\text{len}(\Gamma)$ is the length of $\Gamma$.

\section{Boundary orbits of flow-invariant domains}

Our analysis starts with two fundamental Propositions concerning the behavior of boundary orbits of flow-invariant domains. These results form the basis for the geometric constructions established in the subsequent chapter.

\subsection{Continuity of transit times} 

In order to analyze the separatrix configurations of basins and elliptic sectors, we relate the time parametrization of boundary orbits to that of orbits in the interior of basins and sectors. The aim of this subsection is to formulate this relation within a rigorous mathematical framework by establishing the continuity of transit times on the boundary of flow-invariant domains.

\begin{definition}[transit time {\cite[Definition 3.2]{broughan2003structure}}]\label{def:transit_times}
    Let $\Omega\subset\C$ be a domain and $F\in\Hol(\Omega)$. Let $\Gamma\subset\Omega\setminus F^{-1}(\{0\})$ be an arbitrary orbit of \eqref{eq:planarODE} and $a,b\in\Gamma$.
    \begin{itemize}
		\item[(i)] The transit time $\tau(\Gamma)$ of $\Gamma$ is defined as the Lebesgue measure of the maximum interval of existence of $\Gamma$, i.e.
		\begin{align*}
			\tau(\Gamma):=\lambda(I(x))
		\end{align*}
		for an arbitrary $x\in\Gamma$.
		\item[(ii)] The transit time $\tau(a,b)$ from $a$ to $b$ is defined as
		\begin{align*}
        	\tau(a,b):=\int\limits_{\mathclap{\Gamma(a,b)}}\frac{1}{F(z)}\,\mathrm{d}z\,\text{,}
		\end{align*}
		where $\Gamma(a,b)$ is the curve piece of $\Gamma$ from $a$ to $b$ parameterized via $\eqref{eq:planarODE}$.
	\end{itemize}
\end{definition}
\begin{remark}
    The transit time $\tau(\Gamma)$ does not depend on the choice of $x\in\Gamma$. If $I(x)\not=\R{}$, there holds $\tau(\Gamma)=\sup I(x)-\inf I(x)\in[-\infty,\infty]$. If $x_0\in\Gamma$, $a=\Phi(t_1,x_0)$ and $b=\Phi(t_2,x_0)$, then $\tau(a,b)=t_2-t_1$.
\end{remark}
\begin{lemma}\label{lem:transit_time_supremum}
	Let $\Omega\subset\C$ be a domain, $F\in\Hol(\Omega)$ and $\Gamma\subset\Omega$ an orbit of \eqref{eq:planarODE}. Assume that $\Gamma$ is not periodic. Then
	\begin{align*}
		\tau(\Gamma)=\sup_{\mathclap{x,y\in\Gamma}}\;\tau(x,y)\text{.}
	\end{align*}
\end{lemma}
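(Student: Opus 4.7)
The plan is to reduce the claim to an elementary calculation in the time parameter by exploiting the injectivity of the flow along non-periodic orbits. Fix any $z_0\in\Gamma$ and write the maximal interval of existence as $I(z_0)=(\alpha,\beta)$ with $\alpha\in[-\infty,0)$ and $\beta\in(0,+\infty]$, so that by Definition \ref{def:transit_times}(i) we have $\tau(\Gamma)=\beta-\alpha$. Since $\Gamma$ is not periodic, the flow map $t\mapsto\Phi(t,z_0)$ is injective on $I(z_0)$, so every $x\in\Gamma$ corresponds to a unique $t\in I(z_0)$ with $x=\Phi(t,z_0)$.

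For the upper bound, I would take arbitrary $x,y\in\Gamma$, write $x=\Phi(t_1,z_0)$ and $y=\Phi(t_2,z_0)$ with $t_1,t_2\in I(z_0)$, and invoke the remark following Definition \ref{def:transit_times} to obtain
\begin{align*}
\tau(x,y)=t_2-t_1\le\beta-\alpha=\tau(\Gamma),
\end{align*}
which immediately gives $\sup_{x,y\in\Gamma}\tau(x,y)\le\tau(\Gamma)$.

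For the matching lower bound, I would pick monotone sequences $t_n^-\to\alpha$ and $t_n^+\to\beta$ inside $I(z_0)$ and set $x_n:=\Phi(t_n^-,z_0)$, $y_n:=\Phi(t_n^+,z_0)$. Then $x_n,y_n\in\Gamma$ and
\begin{align*}
\tau(x_n,y_n)=t_n^+-t_n^-\;\longrightarrow\;\beta-\alpha=\tau(\Gamma),
\end{align*}
so $\sup_{x,y\in\Gamma}\tau(x,y)\ge\tau(\Gamma)$. Combining both bounds finishes the proof. Note that if $\tau(\Gamma)=+\infty$, the same argument works by choosing $t_n^+-t_n^-\to\infty$.

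The proof is essentially bookkeeping; the only substantive ingredient is the non-periodicity assumption, which is precisely what is needed to make $t\mapsto\Phi(t,z_0)$ a bijection onto $\Gamma$ and therefore to identify $\tau(x,y)$ unambiguously with the signed time difference $t_2-t_1$. On a periodic orbit the times $t_1,t_2$ are only determined modulo the period, so $\tau(x,y)$ defined via the integral along $\Gamma(a,b)$ would be ambiguous and the claimed equality would fail; this is precisely why the lemma has to exclude that case.
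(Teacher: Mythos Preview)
Your proof is correct and follows essentially the same approach as the paper's: both fix a base point, use non-periodicity to obtain a bijection between $I(z_0)$ and $\Gamma$ via the flow, and identify $\tau(x,y)$ with a signed time difference. The only stylistic difference is that for the lower bound the paper argues by contradiction (assuming the inequality is strict and producing a pair of points violating the bound), whereas you proceed directly with sequences $t_n^-\to\alpha$, $t_n^+\to\beta$; your version is slightly cleaner but the content is the same.
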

\begin{proof}
    This statement is quite obvious. A formal proof is given in the appendix.
\end{proof}
\begin{proposition}\label{prop:boundary_approximation}
    Let $F\in\Hol(\C)$, $F\not\equiv 0$, be entire and $M\subset\C$ a flow-invariant domain w.r.t. $F$. Let $\Gamma\subset\partial M\setminus F^{-1}(\{0\})$ be an arbitrary orbit of \eqref{eq:planarODE}.\footnote{Note that $\partial M=\overline{M}\cap\overline{\C\setminus M}$ is flow-invariant, cf. \cite[Lemma 6.4]{teschl2012ordinary}.} Let $x,y\in\Gamma$ with $\tau(x,y)>0$ and $\varepsilon>0$. Then there exists $\delta\in(0,\varepsilon]$ such that the following two properties are satisfied:
    \begin{itemize}
        \item[(i)] $\begin{aligned}\mathcal{B}_\delta(x)\cap\mathcal{B}_\delta(y) =\emptyset\end{aligned}$.
        \item[(ii)] For all orbits $\Lambda\subset M$ satisfying $\mathcal{B}_\delta(x)\cap\Lambda\not=\emptyset$ and $\mathcal{B}_\delta(y)\cap\Lambda\not=\emptyset$ and for all $x^\prime\in\mathcal{B}_\delta(x)\cap\Lambda$ and $y^\prime\in\mathcal{B}_\delta(y)\cap\Lambda$ 
    \begin{align}\label{eq:boundary_approximation_1}
		|\tau(x^\prime,y^\prime)-\tau(x,y)|<\varepsilon
	\end{align}
	and
	\begin{align}\label{eq:boundary_approximation_2}
		|\Phi(t,x)-\Phi(t,x^\prime)|<\varepsilon\quad\forall\,t\in[0,\tau(x,y)]\text{.}
	\end{align}
    \end{itemize}
\end{proposition}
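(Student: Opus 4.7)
The plan is to combine two ingredients: continuous dependence of $\Phi$ on initial data over the compact interval $[0,T]$, where $T:=\tau(x,y)>0$, and a local holomorphic straightening (flow-box) at $y$. Ingredient~(a) will deliver~\eqref{eq:boundary_approximation_2} directly and force $\Phi(T,x')$ close to $y$ for $x'$ near $x$; ingredient~(b) will then convert that small \emph{spatial} deviation at time $T$ into a small \emph{temporal} deviation of $\tau(x',y')$ from $T$, yielding~\eqref{eq:boundary_approximation_1}. Throughout, note that $T\in I(x)$ (open) and $F(y)\neq 0$, since $\Gamma\subset\C\setminus F^{-1}(\{0\})$.

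Because $F$ is holomorphic and nonvanishing in a neighborhood of $y$, the antiderivative $\psi(z):=\int_y^z d\zeta/F(\zeta)$ is a biholomorphism from some simply connected neighborhood $V$ of $y$ onto a disk $\psi(V)=\mathcal{B}_\rho(0)$ (for some $\rho>0$), satisfying the straightening identity $\psi(\Phi(t,z))=\psi(z)+t$ as long as the flow stays in $V$. Shrinking $V$ if necessary, we may further require that $\Lambda\cap V$ is a single arc for every orbit $\Lambda$ passing through $V$. Set $\mu:=\min(\rho,\varepsilon)$, and choose $\delta_y\in(0,\varepsilon]$ with $\mathcal{B}_{\delta_y}(y)\subset V$ and $\psi(\mathcal{B}_{\delta_y}(y))\subset\mathcal{B}_{\mu/3}(0)$, which is possible by continuity of $\psi$ at $y$. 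By continuous dependence on initial conditions, there exists $\delta_x>0$ such that every $x'\in\mathcal{B}_{\delta_x}(x)$ satisfies: $\Phi(\cdot,x')$ is defined on $[0,T]$, $|\Phi(t,x')-\Phi(t,x)|<\varepsilon$ there (which gives~\eqref{eq:boundary_approximation_2}), and $\Phi(T,x')\in\mathcal{B}_{\delta_y}(y)$. Setting $\delta:=\min(\delta_x,\delta_y,|x-y|/2)\in(0,\varepsilon]$ then provides~(i).

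For~\eqref{eq:boundary_approximation_1}, take $\Lambda\subset M$ with $x'\in\mathcal{B}_\delta(x)\cap\Lambda$ and $y'\in\mathcal{B}_\delta(y)\cap\Lambda$, and set $z_1:=\Phi(T,x')\in V\cap\Lambda$. Since $\Lambda\cap V$ is a single arc containing both $z_1$ and $y'$, the straightening places $\psi(z_1)$ and $\psi(y')$ on a common horizontal line, $\Im\psi(z_1)=\Im\psi(y')$; both images lie in the convex set $\mathcal{B}_{\mu/3}(0)\subset\psi(V)$, so the horizontal segment joining them stays in $\psi(V)$. The flow-box identity then yields $y'=\Phi(s,z_1)$ with the real time $s=\psi(y')-\psi(z_1)$, and $|s|\leq|\psi(y')|+|\psi(z_1)|<2\mu/3\leq 2\varepsilon/3<\varepsilon$. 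Consequently $\tau(x',y')=T+s$ and $|\tau(x',y')-\tau(x,y)|<\varepsilon$. The principal delicacy is choosing the flow-box $V$ both small enough that $\Lambda\cap V$ is always a single arc (so the straightening genuinely identifies $z_1$ and $y'$ as lying on one horizontal line) and with $\psi(V)$ shaped so as to host the straight segment from $\psi(z_1)$ to $\psi(y')$; taking $\psi(V)$ to be a disk handles the latter by convexity, and the former is guaranteed by any sufficiently small flow-box neighborhood of a regular point.
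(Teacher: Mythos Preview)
Your approach is genuinely different from the paper's. The paper works globally along the compact arc $K=\Gamma(x,y)$: it thickens $K$ to a simply connected tube $O\supset K$ free of zeros of $F$, sets $\zeta=\min_{\overline O}|F|>0$, uses continuous dependence to keep $\Phi([0,T],x')$ inside $O$, and then applies Cauchy's integral theorem to the closed curve $\Xi_1\cup\Lambda(x',y')\cup\Xi_2\cup K$ (with $\Xi_1,\Xi_2$ the straight segments $x\to x'$ and $y'\to y$) to obtain
\[
\tau(x',y')-\tau(x,y)=-\int_{\Xi_1}\frac{1}{F}\,dz-\int_{\Xi_2}\frac{1}{F}\,dz,
\]
which is bounded by $2\delta/\zeta\le\varepsilon$. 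Your route localises everything at $y$: continuous dependence transports $x'$ to $z_1=\Phi(T,x')$ near $y$, and a holomorphic flow-box $\psi$ then converts the spatial gap between $z_1$ and $y'$ into a real time $s=\psi(y')-\psi(z_1)$. This is conceptually clean and avoids the global tube.

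There is, however, a real gap at the sentence ``Shrinking $V$ if necessary, we may further require that $\Lambda\cap V$ is a single arc for every orbit $\Lambda$ passing through $V$''. This is \emph{not} a consequence of the flow-box theorem: nothing in rectification prevents a global orbit from leaving a small box and re-entering at a different $\Im\psi$-level, and for general smooth flows it fails outright (think of orbits accumulating on $y$). What you actually need is $\Im\psi(z_1)=\Im\psi(y')$; unwinding the definitions, this says that the loop formed by the orbit segment $\Lambda(z_1,y')$ together with a return path inside $V$ has vanishing imaginary $1/F$-period, i.e.\ that $\Lambda(x',y')$ does not wind nontrivially around a zero of $F$. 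That is exactly the content supplied in the paper's argument by first confining $\Lambda(x',y')$ to the zero-free, simply connected tube $O$. To close your argument you must either import that confinement (which essentially reproduces the paper's step) or prove directly, using the holomorphic structure of the flow and the specific nature of $M$, that no orbit $\Lambda\subset M$ can meet a sufficiently small $V$ on two distinct $\Im\psi$-levels. Either is feasible, but neither is the routine flow-box fact you invoke.
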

\begin{proof}
    Define $K:=\Gamma(x,y)\subset\partial M$ as the curve piece of the orbit $\Gamma$ from $x$ to $y$. Since $K$ is compact and the zeros of $F$ cannot lie arbitrarily close to $K$, we can choose $0<\varepsilon_0<\text{dist}(F^{-1}(\{0\}),K)$ such that
    \begin{align*}
		O:=\bigcup_{\xi\in K}\mathcal{B}_{\varepsilon_0}(\xi)
    \end{align*}
    is a small simply connected open neighbourhood of $K$.\footnote{The set $O$ looks like an "elongated tube" away from the zeros of $F$. The flow in $O$ forms a strip region, cf. \cite{neumann1975classification,markus1954global}, and is topologically equivalent to a quadrangle in $\C$ with parallel straight lines.}
    
    Furthermore, we find $\delta_1>0$ such that $\mathcal{B}_{\delta_1}(x)\cap\mathcal{B}_{\delta_1}(y)=\emptyset$. Note that $x\not=y$, since $\tau(x,y)\not=0$. By continuity of the flow, cf. \cite[Chapter 2.4, Theorem 4]{perko2001differential}, there exists $\delta_2>0$ such that $|\Phi(t,x)-\Phi(t,z)|<\min\{\varepsilon,\varepsilon_0\}$ for all $t\in[0,\tau(x,y)]$ and $z\in\mathcal{B}_{\delta_2}(x)$. With $\zeta:=\min_{z\in\overline{O}}|f(z)|>0$, the number $\delta:=\min\left\{\varepsilon,\varepsilon_0,\delta_1,\delta_2,\frac{\zeta\varepsilon}{2}\right\}>0$ is sufficiently small for our assertions.
    
    In fact, let $\Lambda\subset M$ be an arbitrary orbit such that $\mathcal{B}_{\delta}(x)\cap\Lambda\not=\emptyset$ and $\mathcal{B}_{\delta}(y)\cap\Lambda\not=\emptyset$. Let $x^\prime\in\mathcal{B}_{\delta}(x)\cap\Lambda$ and $y^\prime\in\mathcal{B}_{\delta}(y)\cap\Lambda$ be arbitrary. Then equation \eqref{eq:boundary_approximation_2} is already satisfied, since $\delta\le\delta_2$. Define $\tilde{K}:=\Lambda(x^\prime,y^\prime)\subset M$ as the curve piece of the orbit $\Lambda$ from $x^\prime$ to $y^\prime$. Let $\Xi_1\subset\mathcal{B}_{\delta}(x)$ and $\Xi_2\subset\mathcal{B}_{\delta}(y)$ be the straight connection lines from $x$ to $x^\prime$ and from $y^\prime$ to $y$, respectively. By construction and the choice of $\delta_2$, the path $\Xi:=\Xi_1\cup\tilde{K}\cup\Xi_2\cup K$ is a closed Jordan curve lying completely in $O$. Since $O$ is simply connected, $\Xi$ is null-homotopic in $O$. By applying Definition \ref{def:transit_times} and the homotopy version of Cauchy's Integral Theorem, we conclude
    \begin{align*}
		\tau(x^\prime,y^\prime)-\tau(x,y)=\int\limits_{\mathclap{\tilde{K}}}\frac{1}{F}\,\mathrm{d}z-\int\limits_{\mathclap{K}}\frac{1}{F}\,\mathrm{d}z=\underbrace{\int\limits_{\mathclap{\Xi}}\frac{1}{F}\,\mathrm{d}z}_{=0}-\int\limits_{\mathclap{\Xi_1}}\frac{1}{F}\,\mathrm{d}z-\int\limits_{\Xi_2}\frac{1}{F}\,\mathrm{d}z
    \end{align*}
	and thus
    \begin{align*}
		|\tau(x^\prime,y^\prime)-\tau(x,y)|=\Bigg|\int\limits_{\mathclap{\Xi_1}}\frac{1}{F}\,\mathrm{d}z+\int\limits_{\Xi_2}\frac{1}{F}\,\mathrm{d}z\Bigg|\le\frac{|x-x^\prime|}{\zeta}+\frac{|y-y^\prime|}{\zeta}<\frac{2\delta}{\zeta}\le\varepsilon\text{.}
    \end{align*}
    This proves equation \eqref{eq:boundary_approximation_1}.
\end{proof}

\subsection{Cardinality and possible types of path components}

In this section, we establish a result concerning the cardinality of the set of orbits lying on the boundary of flow-invariant domains. In addition, we characterize the possible structures of the path components of the boundary. The underlying idea for the subsequent proof is based on \cite[Step 7 of the proof of Theorem 3.3]{broughan2003holomorphic}.

\begin{proposition}\label{prop:countable_orbits}
	Let $F\in\Hol(\C)$, $F\not\equiv 0$, be entire and $M\subset\C$ a flow-invariant domain  w.r.t. to $F$ such that all orbits on $\partial M\setminus F^{-1}(\{0\})$ are unbounded. Then $\partial M$ has at most countably many path components, each of which is of one of the following types:
    \begin{itemize}
        \item[(i)] The path component consists of one orbit.
        \item[(ii)] The path component consists of one equilibrium.
		\item[(iii)] The path component consists of one equilibrium and one attached orbit, i.e. the orbit has the equilibrium as one of its limit sets.
		\item[(iv)] The path component consists of one equilibrium and two attached orbits, i.e. each orbit has the equilibrium as one of its limit sets.
	\end{itemize}
    Moreover, the set $\left\{\Gamma(x):x\in\partial M\right\}$ of all orbits of \eqref{eq:planarODE} on $\partial M$ is at most countable.
\end{proposition}
\begin{proof}
    Let $A$ be a path component of $\partial M$. By the Identity Theorem, $F^{-1}(\{0\})$ is a discrete set. Hence, $A$ contains either at most one equilibrium or at least one heteroclinic orbit connecting two equilibria. Since heteroclinic orbits are bounded, only the first case can occur in $A$. One impossible case remains: suppose that $A$ contains more than two unbounded orbits, all reaching an equilibrium $a \in A$ in infinite time. By the Jordan curve Theorem on $S^2$, these orbits separate $\C$ into at least three unbounded nonempty path components. Since $M$ is connected, it must lie entirely within exactly one of these path components. Consequently, at least one of the three unbounded orbits does not belong to $\partial M$, a contradiction. This shows that $A$ is indeed one of the four cases (i)--(iv).

    To prove the countability of the set $\{\Gamma(x): x \in \partial M\}$, it suffices to show that, for each of the types (i)--(iv), the set of path components of $\partial M$ belonging to that type is countable. Since $F \not\equiv 0$, there exist at most countably many equilibria, and hence at most countably many path components of types (ii)--(iv). Furthermore, by the Jordan curve Theorem on $S^2$, every path component $A$ of type (i) has the property that its unbounded orbit separates $\C$ into two disjoint, nonempty open components $A_1$ and $A_2$, i.e. $\C=A\cup A_1\cup A_2$ and $\partial A_1=\partial A_2=A$. Since $M$ is connected, it must lie entirely within either $A_1$ or $A_2$. Thus, we can define $\kappa_A\in\{A_1,A_2\}$ to be the unique component of $\C\setminus A$ with $\kappa_A\cap M=\emptyset$. By construction, we have $\partial \kappa_A = A$. This shows that, for any two disjoint path components $A, \hat{A} \subset \partial M$ of type (i), the corresponding sets $\kappa_A$ and $\kappa_{\hat{A}}$ are always disjoint, i.e., $\kappa_A \cap \kappa_{\hat{A}} = \emptyset$. Therefore, the set
    \begin{align*}
        \mathcal{A}:=\big\{\kappa_A:A\text{ is a path component of }\partial M\text{ of type (i)}\big\}
    \end{align*}
    is a family of pairwise disjoint non-empty open sets. Using the separability of $\C$, we can then apply \cite[Theorem 2.3.18]{engelking1977general} to conclude that $\mathcal{A}$ is countable. It follows that the number of path components of $\partial M$ of type (i) is also countable.
\end{proof}

\section{Separatrices as boundary orbits}

Having established some auxiliary results, we now turn to the notion of a separatrix. In general planar smooth dynamical systems, separatrices are known to form the boundaries of regions in the phase space exhibiting similar geometrical behavior. This naturally raises the question of whether, in the holomorphic setting of \eqref{eq:planarODE}, separatrices can be characterized and defined in an analytically precise manner, motivating the following definition.

\begin{definition}[Separatrix {\cite[Definition 3.1]{broughan2003structure}}]\label{def:separatrix}
	Let $F\in\Hol(\C)$ be entire, $\Gamma$ an arbitrary orbit of \eqref{eq:planarODE} and $x_0\in\Gamma$. If $I(x_0)\cap[0,\infty)\subset\R{}$ is bounded, $\Gamma$ is called a positive separatrix. If $I(x_0)\cap(-\infty,0]\subset\R{}$ is bounded, $\Gamma$ is called a negative separatrix. If $I(x_0)$ is bounded, $\Gamma$ is called a double-sided separatrix.
\end{definition}

At this point, we have more than one definition for a separatrix: the one introduced by Markus \cite{markus1954global} and Neumann \cite{neumann1975classification} as a boundary orbit of a canonical region, and the one given in Definition \ref{def:separatrix}. Throughout the remainder of this paper, the term "separatrix" will be used in the sense of Definition \ref{def:separatrix}, unless explicitly stated otherwise. In particular, references to Markus \cite{markus1954global} and Neumann \cite{neumann1975classification} indicate that separatrices are understood as boundary orbits of canonical regions.

\begin{remark}
	Whether an orbit is a positive/negative/double-sided separatrix or not, does not depend on the choice of the point $x_0$ in Definition \ref{def:separatrix}. Every separatrix is unbounded and has a blow-up.
\end{remark}

\begin{lemma}\label{lem:separatrix_transit_times}
    Let $F\in\Hol(\C)$ be entire and $\Gamma$ an arbitrary orbit of \eqref{eq:planarODE}. Then:
    \begin{itemize}
        \item[(i)] $\Gamma$ is a double-sided separatrix if and only if $\tau(\Gamma)<\infty$.
        \item[(ii)] $\Gamma$ is a positive separatrix if and only if there exists $x\in\Gamma$ such that
        \begin{align*}
            \sup_{\mathclap{y\in\Gamma_+(x)}}\;\tau(x,y)<\infty\text{.}
        \end{align*}
        \item[(iii)] $\Gamma$ is a negative separatrix if and only if there exists $x\in\Gamma$ such that
        \begin{align*}
            \inf_{\mathclap{y\in\Gamma_-(x)}}\;\tau(x,y)>-\infty\text{.}
        \end{align*}
    \end{itemize}
\end{lemma}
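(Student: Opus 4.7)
The plan is to reduce each of the three equivalences to an elementary statement about the maximal interval of existence $I(x_0)$ by invoking the Remark after Definition 2.1, which identifies $\tau(\Gamma)$ with $\lambda(I(x))$ and gives $\tau(a,b) = t_2 - t_1$ whenever $a = \Phi(t_1, z_0)$ and $b = \Phi(t_2, z_0)$.

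First I would dispose of the periodic case. If $\Gamma$ is periodic, then $I(x_0) = \R$, so $\Gamma$ is no kind of separatrix while simultaneously $\tau(\Gamma) = \infty$ and the relevant suprema or infima of $\tau(x,\cdot)$ over $\Gamma_\pm(x)$ fail to be finite in the required direction. Consequently all three equivalences hold trivially, with both sides false. In the remainder I assume $\Gamma$ is non-periodic, so that $I(x_0)$ is a proper subinterval of $\R$ containing $0$.

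For (i), the identity $\tau(\Gamma) = \lambda(I(x_0))$ from the Remark makes the equivalence a restatement of the elementary fact that an interval in $\R$ has finite Lebesgue measure iff it is bounded, which is precisely the double-sided separatrix condition. For (ii), I would fix any $x \in \Gamma$; every $y \in \Gamma_+(x)$ has the form $y = \Phi(t, x)$ with $t \in I(x) \cap [0,\infty)$, and the Remark yields $\tau(x, y) = t$, so
\begin{align*}
\sup_{y \in \Gamma_+(x)} \tau(x, y) \;=\; \sup\bigl(I(x) \cap [0,\infty)\bigr) \;=\; \sup I(x).
\end{align*}
This is finite iff $I(x) \cap [0,\infty)$ is bounded, iff $\Gamma$ is a positive separatrix; the choice of $x$ is immaterial, since time-translation shifts $\sup I(x)$ by a finite amount. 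Part (iii) is the symmetric analogue, obtained by replacing $\Gamma_+(x)$ with $\Gamma_-(x)$ and taking the corresponding extremum on $I(x) \cap (-\infty, 0]$.

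The entire argument is a bookkeeping exercise combining Definitions 2.1 and 3.1 with the boundedness/finite-measure dichotomy for intervals, so I do not foresee any genuine obstacle. The only mild points of care are to treat the periodic case at the outset (since Lemma 2.3 explicitly requires non-periodicity) and to observe that the existence quantifier on $x \in \Gamma$ in (ii) and (iii) is harmless, because a finite shift in time leaves the one-sided boundedness of $I(x)$ unchanged.
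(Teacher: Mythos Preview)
Your argument is correct and aligns with the paper's approach: the paper's own proof is a single sentence referring the reader to the method of Lemma~\ref{lem:transit_time_supremum}, whose appendix proof proceeds via exactly the bijection $\varphi_x\colon I(x)\to\Gamma$, $t\mapsto\Phi(t,x)$ with inverse $\varphi_x^{-1}(y)=\tau(x,y)$ that you extract from the Remark after Definition~\ref{def:transit_times}. Your explicit disposal of the periodic case and the observation that the existential quantifier on $x$ is harmless already go beyond what the paper supplies.
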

\begin{proof}
	The proof is straightforward and based on arguments similar to those used in the proof of Lemma~\ref{lem:transit_time_supremum}.
\end{proof}

By applying our continuity result for transit times along the boundary of flow-invariant sets, cf. Proposition \ref{prop:boundary_approximation}, we are now able to prove that the boundary of certain canonical regions in $\C$ consists of separatrices in the sense of Definition \ref{def:separatrix}. More specifically, we establish this for the basin of simple equilibria (centers, nodes, and foci) as well as for global elliptic sectors. In the following, we start with the case of a center basin.

\subsection{Separatrices on the boundary of center basins}

We introduced the center basin in our recent paper \cite{kainz2026geometry} and analyzed its geometry. For completeness, we briefly summarize these results.

\begin{definition}[{\cite[Definition 2.1]{kainz2026geometry}}]
    Let $F\in\Hol{}(\C{})$, $F\not\equiv0$, be entire and $a\in\C$ a center\footnote{cf. \cite[Definition 3.1]{kainz2024local}.} of \eqref{eq:planarODE}. The center basin $\mathcal{V}$ of $F$ in $a$ is
    \begin{align*}
        \mathcal{V}:=\{a\}\cup\left\{x\in\C:\Gamma(x)\text{ is periodic with }a\in\Int(\Gamma)\right\}.
    \end{align*}
\end{definition}

\begin{theorem}\label{thm:center_basin_properties}
    Let $F\in\Hol{}(\C{})$, $F\not\equiv0$, be entire and $a\in\C$ a center of \eqref{eq:planarODE} with its corresponding basin $\mathcal{V}$. Then:
    \begin{itemize}
        \item[(i)] $\mathcal{V}$ and $\partial\mathcal{V}$ are flow-invariant.
        \item[(ii)] $\partial\mathcal{V}\cap F^{-1}(\{0\})=\emptyset$.
        \item[(iii)] $\mathcal{V}$ is open, simply connected and unbounded.
        \item[(iv)] All orbits on $\partial\mathcal{V}$ are unbounded.
    \end{itemize}
\end{theorem}
\begin{proof}
    We established these geometrical properties in \cite[Chapter 2]{kainz2026geometry}.
\end{proof}

\newpage

\begin{proposition}\label{prop:periodic_orbits_interior_period}
    Let $F\in\Hol{}(\C)$, $F\not\equiv 0$, be entire and $\Gamma$ a periodic orbit of \eqref{eq:planarODE}. Then $\Gamma$ encloses an unique equilibrium, a center $a$, and its interior (except for the center) is entirely filled with periodic orbits, each of which also encloses $a$. Moreover, the period $T$ of $\Gamma$ is given by
    \begin{align*}
        T=\frac{2\pi\mathrm{i}}{F^\prime(a)}\text{.}
    \end{align*}
\end{proposition}
\begin{proof}
    The first statement is \cite[Corollary 5.1]{kainz2024local}. Moreover, by \cite[Corollary 4.6]{kainz2024local}, we have $F^\prime(a)\not=0$, i.e. the center $a$ in the interior of $\Gamma$ is a simple zero. The formula for the period $T$ follows from the Residue Theorem via the calculation
    \begin{align*}
          T=\int\limits_{\mathclap{\Gamma}}\frac{1}{F}\,\mathrm{d}z=2\pi\mathrm{i}\,\Res\left(\frac{1}{F},a\right)=2\pi\mathrm{i}\lim_{z\to a}\frac{z-a}{F(z)}=\frac{2\pi\mathrm{i}}{\lim\limits_{z\to a}\frac{F(z)-F(a)}{z-a}}=\frac{2\pi\mathrm{i}}{F^\prime(a)}\text{.}
    \end{align*}
    This formula can also be found in \cite[Theorem 2.3]{broughan2003holomorphic}.
\end{proof}

\begin{definition}
	Let $F\in\Hol{}(\C{})$, $F\not\equiv0$, be entire and $a\in\Omega$ a center of \eqref{eq:planarODE}. Then the period of $a$ is defined as the number
    \begin{align*}
        T(a):=\frac{2\pi\mathrm{i}}{F^\prime(a)}\text{.}
    \end{align*}
\end{definition}

We now turn to the analysis of the separatrix configuration of the center basin. The following Theorem can also be found in \cite[Theorem 4.1]{broughan2003structure}. However, the proof there contains certain gaps. In particular, it is not ensured that the $\delta_i$ are sufficiently small such that the sum of the transit times on the outermost "approximating" periodic orbit is indeed bounded by the period of the center. A proof that no overlaps occur on this "approximating" orbit is missing. For this reason, we provide a complete detailed proof here.

\begin{theorem}[Separatrix configuration of centers {\cite[Theorem 4.1]{broughan2003structure}}]\label{thm:separatrices_center}
    Let $F\in\Hol(\C)$, $F\not\equiv0$, be entire and $a\in\C$ a center of \eqref{eq:planarODE} with its corresponding basin $\mathcal{V}$. Then $\partial\mathcal{V}$ consists of at most countably many double-sided separatrices, i.e. there exists an index set $\mathcal{Q}\subset\N$ and double-sided separatrices $C_n\subset\partial\mathcal{V}$, $n\in\mathcal{Q}$, such that
    \begin{align}\label{eq:separatrix_center_union}
          \partial\mathcal{V}=\bigcup_{\mathclap{n\in\mathcal{Q}}}C_n\text{.}
    \end{align}
    Furthermore, the sum\footnote{The order of summation in \eqref{eq:center_sum_transit_times} is to be understood in the sense of \cite[Definition 8.2.1]{tao2022analysis}.} of the transit times of these separatrices is absolutely convergent and bounded by the period of $a$, i.e.
    \begin{align}\label{eq:center_sum_transit_times}
          \sum_{\mathclap{n\in\mathcal{Q}}}\tau(C_n)\le T(a)=\frac{2\pi\mathrm{i}}{F^\prime(a)}\text{.}
    \end{align}
\end{theorem}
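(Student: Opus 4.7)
The plan is to transfer length bounds from periodic orbits inside $\mathcal V$ — all having the common period $T(a)$ by Proposition \ref{prop:period_center} — onto the boundary orbits via Proposition \ref{prop:boundary_approximation}. First I observe that $\partial\mathcal V$ is flow-invariant and, since every boundary orbit of a center basin is unbounded by \cite{kainz2024basins}, contains no equilibria. The set of orbits is countable by Lemma \ref{lem:countable_orbits}, so $\partial\mathcal V=\bigcup_{n\in\mathcal Q}C_n$ with $\mathcal Q\subset\N$ and $C_n\subset\partial\mathcal V\setminus F^{-1}(\{0\})$ for every $n$. Once I establish the transit-time bound \eqref{eq:center_sum_transit_times}, this forces $\tau(C_n)\le T(a)<\infty$, which together with Lemma \ref{lem:separatrix_transit_times}~(i) turns each $C_n$ into a double-sided separatrix and yields \eqref{eq:separatrix_center_union}.

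To prove \eqref{eq:center_sum_transit_times}, I reduce to finite subsums and pass to the supremum. Fix $\eta>0$ and a finite $N\subset\mathcal Q$. By Lemma \ref{lem:transit_time_supremum} I pick, for each $n\in N$, points $x_n,y_n\in C_n$ with $\tau(x_n,y_n)>\tau(C_n)-\eta$ (or arbitrarily large if $\tau(C_n)=\infty$, which will lead to a contradiction). The compact curve pieces $\Gamma(x_n,y_n)\subset\partial\mathcal V$ are pairwise disjoint, so I choose $\varepsilon>0$ smaller than a third of their minimum mutual distance. Applying Proposition \ref{prop:boundary_approximation} to each pair with this $\varepsilon$ yields radii $\delta_n>0$. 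Granted a periodic orbit $\Lambda\subset\mathcal V$ meeting every $\mathcal B_{\delta_n}(x_n)$ and $\mathcal B_{\delta_n}(y_n)$, I select $x_n^\prime,y_n^\prime\in\Lambda$ in the respective balls, obtain $|\tau(x_n^\prime,y_n^\prime)-\tau(x_n,y_n)|<\varepsilon$ from \eqref{eq:boundary_approximation_1}, and by \eqref{eq:boundary_approximation_2} the arc $\Lambda(x_n^\prime,y_n^\prime)$ lies (up to a piece of transit length at most $\varepsilon$) in the $\varepsilon$-tube around $\Gamma(x_n,y_n)$. By the choice of $\varepsilon$ these tubes are pairwise disjoint, so the arcs $\Lambda(x_n^\prime,y_n^\prime)$ are pairwise disjoint subarcs of the Jordan curve $\Lambda$ and hence $\sum_{n\in N}\tau(x_n^\prime,y_n^\prime)\le T(a)$. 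Combining yields $\sum_{n\in N}\tau(x_n,y_n)\le T(a)+|N|\varepsilon$; letting $\varepsilon\to 0$, then $\eta\to 0$, and taking the supremum over finite $N$ delivers \eqref{eq:center_sum_transit_times}.

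The main obstacle is producing the simultaneously approximating periodic orbit $\Lambda$. For a single pair $(x_n,y_n)$ this is routine: by continuity of the flow, an interior point $p\in\mathcal V$ close enough to $x_n$ satisfies $|\Phi(\tau(x_n,y_n),p)-y_n|$ arbitrarily small, so the periodic orbit $\Gamma(p)\subset\mathcal V$ passes within the prescribed tolerance of both endpoints. Extending this to finitely many pairs requires the foliated structure of the center basin from \cite{kainz2024basins}: the periodic orbits in $\mathcal V$ are nested Jordan curves ordered by inclusion of their interiors and exhaust $\mathcal V$, so for any finite collection of prescribed neighbourhoods on $\partial\mathcal V$ the sufficiently ``outer'' periodic orbits eventually enter all of them simultaneously. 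Making this monotonicity precise within the topological framework already set up in \cite{kainz2024basins} is the technical heart of the proof, and is precisely where Broughan's original argument in \cite[Theorem 4.1]{broughan2003structure} appears to fall short.
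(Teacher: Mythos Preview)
Your outline is correct and matches the paper's strategy step for step: countability via Lemma~\ref{lem:countable_orbits}, approximation of each boundary segment by an arc on a periodic orbit via Proposition~\ref{prop:boundary_approximation}, disjointness of those arcs through the separation of the $\varepsilon$-tubes, and summation against the common period $T(a)$. The one step you leave open---producing a single periodic orbit $\Lambda$ that meets all the balls $\mathcal B_{\delta_n}(x_n)$, $\mathcal B_{\delta_n}(y_n)$ simultaneously---is resolved in the paper not by an exhaustion argument but by a finite selection: for each $n$ in your finite set $N$ pick (by the routine single-pair case) \emph{some} periodic orbit $\Lambda_n\subset\mathcal V$ meeting both $\delta_n$-balls, and then let $\Lambda:=\Lambda_{n_0}$ be the outermost of these finitely many nested Jordan curves; since every $x_n,y_n\in\partial\mathcal V$ lies in $\Ext(\Lambda)$ while each ball already meets $\Lambda_n\subset\overline{\Int(\Lambda)}$, connectedness of the balls forces $\Lambda$ to cross all of them.
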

\begin{proof}
    If $\partial\mathcal{V}=\emptyset$, nothing is to show. So we assume that $\partial\mathcal{V}$ is not empty. By Proposition \ref{prop:countable_orbits} and Theorem \ref{thm:center_basin_properties}, $\partial\mathcal{V}$ is the union of at most countably many unbounded orbits $C_n$, $n\in\mathcal{Q}\subset\N$, i.e. \eqref{eq:separatrix_center_union} holds.
	
	\proofpart{1}{Applying Proposition \ref{prop:boundary_approximation}}
	
	Let $n\in\mathcal{Q}$. We fix $\varepsilon>0$ and $x,y\in C_n$ with $\tau(x,y)>0$. By Proposition \ref{prop:boundary_approximation}, there exists $\delta\in(0,\varepsilon]$ such that for all orbits $\Lambda\subset\mathcal{V}$ satisfying $\mathcal{B}_{\delta}(x)\cap\Lambda\not=\emptyset$ and $\mathcal{B}_{\delta}(y)\cap\Lambda\not=\emptyset$ it holds that
    \begin{align*}
		|\tau(x^\prime,y^\prime)-\tau(x,y)|<\varepsilon\quad\forall\,x^\prime\in\mathcal{B}_{\delta}(x)\cap\Lambda,\;\forall\,y^\prime\in\mathcal{B}_{\delta}(y)\cap\Lambda\text{.}
    \end{align*}
	
	\proofpart{2}{All boundary orbits are double-sided separatrices}
	
	By continuity of the flow, cf. \cite[Chapter 2.4, Theorem 4]{perko2001differential}, there exists $\tilde{\delta}\in(0,\delta]$ such that $|\Phi(\tau(x,y),z)-y|<\delta$ for all $z\in\mathcal{B}_{\tilde{\delta}}(x)$. We choose $z_0\in\mathcal{B}_{\tilde{\delta}}(x)\cap\mathcal{V}$. We can apply Proposition \ref{prop:boundary_approximation} for $\Lambda:=\Gamma(z_0)\subset\mathcal{V}$, $x^\prime:=z_0\in\mathcal{B}_{\delta}(x)$ and $y^\prime:=\Phi(\tau(x,y),z_0)\in\mathcal{B}_{\delta}(y)$. Since $C_n$ is unbounded, it cannot be periodic and thus $\tau(x^\prime,y^\prime)\le T(a)$, cf. Proposition \ref{prop:periodic_orbits_interior_period}. We conclude
	\begin{align*}
		|\tau(x,y)|\le\left|\tau(x^\prime,y^\prime)\right|+\left|\tau(x,y)-\tau(x^\prime,y^\prime)\right|\le T(a)+\varepsilon\text{.}
	\end{align*}
	Since $x$ and $y$ are arbitrary, it follows by Lemma \ref{lem:transit_time_supremum}
	\begin{align*}
		\tau(C_n)=\sup_{\mathclap{x,y\in C_n}}\tau(x,y)\le\sup_{\mathclap{x,y\in C_n}}\left|\tau(x,y)\right|\le\sup_{\mathclap{x,y\in C_n}}T(a)+\varepsilon=T(a)+\varepsilon\text{.}
	\end{align*}
	Since $\varepsilon$ is arbitrary, we get $\tau(C_n)\le T(a)<\infty$, i.e. $C_n$ is a double-sided separatrix, cf. Lemma \ref{lem:separatrix_transit_times} (i). It remains to show equation \eqref{eq:center_sum_transit_times}.
	
	\proofpart{3}{Choosing $\varepsilon_0$ and $\varepsilon$ and applying Step 1}
	
	We fix $\tilde{\varepsilon}>0$, $N\in\N\setminus\{1\}$ and $\tilde{\mathcal{Q}}\subset\mathcal{Q}$ with $|\tilde{\mathcal{Q}}|=N$. Moreover, for all $n\in\tilde{\mathcal{Q}}$ we fix points $x_n,y_n\in C_n$ such that $\tau(x_n,y_n)>0$. We define the compact sets $K_n:=C_n(x_n,y_n)$, $n\in\tilde{\mathcal{Q}}$, and the number
	\begin{align*}
		\varepsilon_0:=\min_{\mathclap{\substack{i,j\in\tilde{\mathcal{Q}}\\i\not=j}}}\text{dist}(K_i,K_j)\text{.}
	\end{align*}
	By \cite[Theorem 32.2]{munkres2000topology}, we get $\varepsilon_0>0$. For all $n\in\tilde{\mathcal{Q}}$ and $\varepsilon:=\min\left\{\frac{\varepsilon_0}{4},\frac{\tilde{\varepsilon}}{N}\right\}$ we can use Step 1: There exist $\delta_n\in(0,\varepsilon]$ and an orbit $\Lambda_n\subset\mathcal{V}$ with $\mathcal{B}_{\delta_n}(x_n)\cap\Lambda_n\not=\emptyset$ and $\mathcal{B}_{\delta_n}(y)\cap\Lambda_n\not=\emptyset$ such that Proposition \ref{prop:boundary_approximation} can be applied. Since $N<\infty$, we find $n_0\in\tilde{\mathcal{Q}}$ such that $\Lambda_{n_0}$ is the outermost periodic orbit, i.e. $\Lambda_n\subset\overline{\Int(\Lambda_{n_0})}\subset\mathcal{V}$ for all $n\in\tilde{\mathcal{Q}}$. This also implies that $\mathcal{B}_{\delta_n}(x_n)\cap\Lambda_{n_0}\not=\emptyset$ and $\mathcal{B}_{\delta_n}(y_n)\cap\Lambda_{n_0}\not=\emptyset$ for all $n\in\tilde{\mathcal{Q}}$. We choose $x_n^\prime\in\mathcal{B}_{\delta_n}(x_n)\cap\Lambda_{n_0}$ and $y_n^\prime\in\mathcal{B}_{\delta_n}(y_n)\cap \Lambda_{n_0}$ and define $L_n:=\Lambda_{n_0}(x_n^\prime,y_n^\prime)$.
	
	\proofpart{4}{Impossibility of overlaps on $\Lambda_{n_0}$\footnote{This step solves the issues in the proof of \cite[Theorem 4.3]{broughan2003structure} mentioned above.}}
	
	Suppose that there exist two indices $i,j\in\tilde{\mathcal{Q}}$, $i\not=j$, such that $L_i\cap L_j\not=\emptyset$. Then we must have $\{x_i^\prime,y_i^\prime\}\cap L_j\not=\emptyset$. Assume $x_i^\prime\in L_j$. The case $y_i^\prime\in L_j$ can be led to contradiction by similar arguments. We have
	\begin{align*}
		\text{dist}(K_i,x_i^\prime)\le|x_i-x_i^\prime|\le\delta_i\le\varepsilon\le\frac{\varepsilon_0}{4}\text{.}
	\end{align*}
	Moreover, with $t:=\tau(x_j^\prime,x_i^\prime)>0$ and $\eta_1:=\Phi(t,x_j)\in K_j$, we can use \eqref{eq:boundary_approximation_2} to conclude
	\begin{align*}
		\text{dist}(K_j,x_i^\prime)\le|\eta_1-x_i^\prime|=|\Phi(t,x_j)-\Phi(t,x_j^\prime)|<\varepsilon\le\frac{\varepsilon_0}{4}\text{.}
	\end{align*}
	By the choice of $\varepsilon_0$, both inequalities lead to the contradiction
	\begin{align*}
		\varepsilon_0\le\text{dist}(K_i,K_j)\le\text{dist}(K_j,x_i^\prime)+\text{dist}(K_i,x_i^\prime)=\frac{\varepsilon_0}{4}+\frac{\varepsilon_0}{4}\le\frac{\varepsilon_0}{2}\text{.}
	\end{align*}
	Hence we indeed get $L_i\cap L_j=\emptyset$ for all $i,j\in\tilde{\mathcal{Q}}$, $i\not=j$.
	
	\proofpart{5}{Estimating the sum of all transit times on $\partial\mathcal{V}$}
	
	By Proposition \ref{prop:periodic_orbits_interior_period}, we conclude
	\begin{align*}
		\sum_{\mathclap{n\in\tilde{\mathcal{Q}}}}\tau(x_n^\prime,y_n^\prime)\le\tau(\Lambda_{n_0})\le T(a)\text{.}
	\end{align*}
	By equation \eqref{eq:boundary_approximation_1}, it follows
	\begin{align*}
		\sum_{\mathclap{n\in\tilde{\mathcal{Q}}}}\tau(x_n,y_n)\le\sum_{\mathclap{n\in\tilde{\mathcal{Q}}}}\left|\tau(x_n^\prime,y_n^\prime)\right|+\underbrace{\left|\tau(x_n,y_n)-\tau(x_n^\prime,y_n^\prime)\right|}_{<\frac{\tilde{\varepsilon}}{N}}<T(a)+\frac{N\tilde{\varepsilon}}{N}=T(a)+\tilde{\varepsilon}\text{.}
	\end{align*}
	Since $x_n$ and $y_n$, $n\in\tilde{\mathcal{Q}}$, are arbitrary, it follows by Lemma \ref{lem:transit_time_supremum}
	\begin{align*}
		\sum_{\mathclap{n\in\tilde{\mathcal{Q}}}}\tau(C_n)&=\sum_{n\in\tilde{\mathcal{Q}}}\, \sup_{\substack{x_n\in C_n\\ y_n\in C_n}}\tau(x_n,y_n)\\
		&=\sup\left\{\sum_{n\in\tilde{\mathcal{Q}}}\tau(x_n,y_n):x_n,y_n\in C_n\;\,\forall\,n\in\tilde{\mathcal{Q}}\right\}\\
		&\le T(a)+\tilde{\varepsilon}\text{.}
	\end{align*}
	In the second equality we used the fact that for all $n\in\tilde{\mathcal{Q}}$ the number $\tau(x_n,y_n)$ does not depend on the choice of $x_m,y_m\in C_m$, $m\in\tilde{\mathcal{Q}}\setminus\{n\}$. Since $\tilde{\varepsilon}$ and $N$ are arbitrary, we get equation \eqref{eq:center_sum_transit_times} for finite index sets.
	
	\proofpart{6}{Countable index sets $\mathcal{Q}$}
	
	To establish the well-definedness of the series in \eqref{eq:center_sum_transit_times} when $\mathcal{Q}$ is countable, we need to show that the series is absolutely convergent in the sense of \cite[Definition~8.2.1]{tao2022analysis}. To show this, let $\mathcal{Q}_N\subset\mathcal{Q}$ be a subset consisting of $N\in\N$ arbitrary indices in $\mathcal{Q}$. By Step 5, we get
	\begin{align*}
        \sum_{\mathclap{n\in\mathcal{Q}_N}}\tau(C_n)\leq T(a)\text{.}
	\end{align*}
	This estimate is independent of the choice of $N$ and $\mathcal{Q}_N$. Hence, it follows that
    \begin{align*}
        \sup\left\{\sum_{n\in A}\tau(C_n):A\subset\mathcal{Q},\,A\text{ is finite}\right\}\leq T(a)<\infty\text{.}
    \end{align*}
    As the transit times $\tau(C_n)$, $n\in\mathcal{Q}$, are nonnegative, we can apply \cite[Lemma 8.2.3]{tao2022analysis} to conclude the well-definedness of the series in \eqref{eq:center_sum_transit_times}. Moreover, this shows that \eqref{eq:center_sum_transit_times} holds also for countable index sets $\mathcal{Q}$.
\end{proof}

\subsection{Separatrices on the boundary of node and focus basins}

The next step is to consider the case of a node or focus (sink or source). As in the case of a center, we begin by recalling the definition of the corresponding basin together with its geometric properties.

\begin{definition}
    Let $F\in\Hol{}(\C{})$, $F\not\equiv0$, be entire and $a\in\C$ be a stable (unstable) focus or node.\footnote{cf. \cite[Definition 3.1]{kainz2024local}.} The basin of attraction (repulsion) $\mathcal{N}$ of $F$ in $a$ is
    \begin{align*}
        \mathcal{N}:=\left\{x\in\C:\omega_{+(-)}(\Gamma(x))=\{a\}\right\}\text{.}
    \end{align*}
\end{definition}

\begin{theorem}\label{thm:node_focus_basin_properties}
    Let $F\in\Hol{}(\C{})$, $F\not\equiv0$, be entire and $a\in\C$ a focus or node of \eqref{eq:planarODE} with its corresponding basin $\mathcal{N}$. Then:
    \begin{itemize}
        \item[(i)] $\mathcal{N}$ and $\partial\mathcal{N}$ are flow-invariant.
        \item[(ii)] $\partial\mathcal{N}$ consists of equilibria and unbounded orbits.
        \item[(iii)] $\mathcal{N}$ is open, simply connected and unbounded.
    \end{itemize}
\end{theorem}
\begin{proof}
    We established these geometrical properties in \cite[Chapter 4]{kainz2026geometry}.
\end{proof}

\begin{proposition}[{\cite[Proposition 4.4]{kainz2026geometry}}]\label{prop:nf_no_isolated_equilibria}
    Let $F\in\Hol{}(\C{})$, $F\not\equiv0$, be entire and $a\in\C$ a focus or node of \eqref{eq:planarODE} with its corresponding basin $\mathcal{N}$. It holds
    \begin{align*}
        \forall\,\tilde{a}\in\partial\mathcal{N}\cap F^{-1}(\{0\}):\forall\,\rho>0: \left(\mathcal{B}_\rho(\tilde{a})\cap\partial\mathcal{N}\right)\setminus\{\tilde{a}\}\not=\emptyset,
    \end{align*}
    i.e. there are no isolated points with respect to the subspace topology on $\partial\mathcal{N}$. Moreover, for all $\tilde{a}\in\partial\mathcal{N}\cap F^{-1}(\{0\})$ there exists an unbounded orbit $\Gamma\subset\partial\mathcal{N}$ with $\tilde{a}\in\omega_+(\Gamma)\cup\omega_-(\Gamma)$, i.e. all equilibria on $\partial\mathcal{N}$ are attached to an orbit on $\partial\mathcal{N}$.
\end{proposition}
\begin{proof}
    A detailed proof can be found in the appendix of \cite{kainz2026geometry}.
\end{proof}

In what follows, we state and prove the separatrix configuration of nodes and foci.

\begin{theorem}[Separatrix configuration of nodes and foci]\label{thm:separatrices_node}
	Let $F\in\Hol(\C)$, $F\not\equiv0$, be entire and $a\in\C$ a node or focus of \eqref{eq:planarODE} with its corresponding basin $\mathcal{N}$. Then the path components of $\partial\mathcal{N}$ can be indexed by an at most countable index set $\mathcal{Q}\subset\N$, i.e. the path components $\{C_n\}_{n\in\mathcal{Q}}$ satisfy
	\begin{align}\label{eq:separatrix_node_union}
		\partial\mathcal{N}=\bigcup_{\mathclap{n\in\mathcal{Q}}}C_n\text{.}
	\end{align}
	Furthermore, for all $n\in\mathcal{Q}$ the path component $C_n\subset\partial\mathcal{N}$ is of one of the following types:
	\begin{enumerate}[label=(\Alph*)]
        \item \label{itm:node_case_A} The set $C_n$ consists of one separatrix $\Gamma_n^{[1]}$. This separatrix is positive (negative) if and only if $a$ is stable (unstable).
		\item \label{itm:node_case_B} The set $C_n$ consists of one separatrix $\Gamma_n$ and one attached equilibrium $a_n$. This separatrix is positive (negative) if and only if $a$ is stable (unstable).
		\item \label{itm:node_case_C} The set $C_n$ consists of two separatrices $\Gamma_n^{[1]}$ and $\Gamma_n^{[2]}$ and one equilibrium $a_n$ attached to these separatrices. Both separatrices are positive (negative) if and only if $a$ is stable (unstable).
	\end{enumerate}
\end{theorem}
\begin{proof}
	We assume w.l.o.g. $\partial\mathcal{N}\not=\emptyset$.
    
    \proofpart{1}{Only the cases \ref{itm:node_case_A}, \ref{itm:node_case_B} and \ref{itm:node_case_C} are geometrically possible}
    
    By Proposition \ref{prop:countable_orbits} and Theorem \ref{thm:node_focus_basin_properties}, $\partial\mathcal{N}$ consists of at most countably many unbounded orbits and countably many equilibria within countably many path components. Moreover, Proposition \ref{prop:nf_no_isolated_equilibria} ensures that all path components of $\partial\mathcal{N}$ contain at least one unbounded orbit, i.e. the case of a single equilibrium in Proposition \ref{prop:countable_orbits} (ii) cannot occur. Hence, there indeed exists a at most countable index set $\mathcal{Q}\subset\N$ such that the path-components $\{C_n\}_{n\in\mathcal{Q}}$ satisfy equation \eqref{eq:separatrix_node_union}.
    
    In the following, we assume w.l.o.g. that $a$ is stable. The unstable case can be proven analogously by reversing the direction of time. Let $n\in\mathcal{Q}$ be arbitrarily fixed. We show that the orbits in the cases \ref{itm:node_case_A}, \ref{itm:node_case_B} and \ref{itm:node_case_C} are not only unbounded, but even separatrices.
    
    \proofpart{2}{Finding an appropriate upper bound}
    
    Let $\Gamma_n\subset C_n$. We fix a point $x\in\Gamma_n$ and choose $r_1,r_2>0$ small enough such that $\mathcal{B}_{r_1}(a)\subset\mathcal{N}$, $\mathcal{B}_{r_2}(x)\cap F^{-1}(\{0\})=\emptyset$ and $\mathcal{B}_{r_1}(a)\cap\mathcal{B}_{r_2}(x)=\emptyset$. Moreover, we choose a circle without contact $C\subset\mathcal{B}_{r_1}(a)$ around $a$, cf. \cite[\S3, 10.-14., \S7, 1.-2. and \S18, Lemma 3]{andronov1973qualitativetheory}, i.e. $C$ is a continuously differentiable closed path being nowhere tangential to $F$ and satisfying $\Int(C)\cap F^{-1}(\{0\})=\{a\}$.\footnote{In particular, from the equations (6) and (11) in \cite[\S7, 1.]{andronov1973qualitativetheory} and the remarks made in \cite[\S7, 2.]{andronov1973qualitativetheory} it follows that $C$ can be chosen as a linear transformed circle or ellipse.} Additionally, every orbit in $\mathcal{N}$ crosses $C$ exactly once, cf. \cite[\S3, 10., Figure 54]{andronov1973qualitativetheory}. Moreover, we choose a transversal $l\subset\mathcal{B}_{r_2}(x)$ through $x$, cf. \cite[\S 3]{andronov1973qualitativetheory}, as well as $\xi\in l\cap\mathcal{N}$ and define $\zeta\in\Gamma(\xi)$ as the intersection point of $\Gamma(\xi)$ with $C$, i.e. $\Gamma(\xi)\cap C=\{\zeta\}$. We define $L_1:=\text{len}(l)>0$ and $L_2:=\text{len}(C)>0$ as the lengths of $l$ and $C$, respectively, as well as
    \begin{align*}
		b_1:=\min\left\{|F(z)|:z\in l\right\}>0,\qquad b_2:=\min\left\{|F(z)|:z\in C\right\}>0\text{.}
    \end{align*}
    Then the number
    \begin{align*}
		M:=|\tau(\xi,\zeta)|+\frac{L_1}{b_1}+\frac{L_2}{b_2}>0
    \end{align*}
    will be an upper bound for the transit time on $\Gamma_+(x)\subset\Gamma_n$.
    
    \proofpart{3}{Applying Proposition \ref{prop:boundary_approximation}}
    
    Let $y\in\Gamma_+(x)\setminus\{x\}$ be arbitrary. We show that $\tau(x,y)\le M$. Let $\varepsilon\in(0,\text{dist}(\Gamma_n,C))$ be arbitrary. By Proposition \ref{prop:boundary_approximation}, there exists $\delta\in(0,\varepsilon]$ such that $\mathcal{B}_\delta(x)\cap\mathcal{B}_\delta(y)=\emptyset$ and for all orbits $\Lambda\subset\mathcal{N}$ satisfying $\mathcal{B}_\delta(x)\cap\Lambda\not=\emptyset$ and $\mathcal{B}_\delta(y)\cap\Lambda\not=\emptyset$ it holds that
    \begin{align*}
		|\tau(x^\prime,y^\prime)-\tau(x,y)|<\varepsilon\quad\forall\,x^\prime\in\mathcal{B}_{\delta}(x)\cap\Lambda,\;\forall\,y^\prime\in\mathcal{B}_{\delta}(y)\cap\Lambda\text{.}
    \end{align*}
	and
    \begin{align*}
		|\Phi(t,x)-\Phi(t,x^\prime)|<\varepsilon\quad\forall\,t\in[0,\tau(x,y)]\text{.}
    \end{align*}
    By continuity of the flow, cf. \cite[Chapter 2.4, Theorem 4]{perko2001differential}, there exists $\tilde{\delta}\in(0,\delta]$ such that $|\Phi(\tau(x,y),\xi^\prime)-y|<\delta$ for all $\xi^\prime\in\mathcal{B}_{\tilde{\delta}}(x)$. Since $x\in\partial\mathcal{N}$, there exists a point $\xi^\prime\in\mathcal{B}_{\tilde{\delta}}(x)\cap\mathcal{N}\cap l$. Hence, by choosing $\Lambda:=\Gamma(\xi^\prime)\subset\mathcal{N}$, $x^\prime:=\xi^\prime\in\mathcal{B}_{\delta}(x)$ and $y^\prime:=\Phi(\tau(x,y),\xi^\prime)\in\mathcal{B}_{\delta}(y)$, we can apply Proposition \ref{prop:boundary_approximation}. Let $\zeta^\prime$ be the intersection point of $\Lambda$ with $C$, i.e. $\Lambda\cap C=\{\zeta^\prime\}$. Since $\varepsilon<\text{dist}(\Gamma_n,C))$, we get $\zeta^\prime\in\Gamma_+(y^\prime)$ and
    \begin{align}\label{eq:node_focus_estimation}
        \tau(x,y)\le|\tau(\xi^\prime,y^\prime)|+|\tau(\xi^\prime,y^\prime)-\tau(x,y)|<|\tau(\xi^\prime,\zeta^\prime)|+\varepsilon\text{.}
    \end{align}
    Let $\Lambda_1$ be the piece of $l$ connecting $\xi$ with $\xi^\prime$ and $\Lambda_2$ be an the piece of $C$ connecting $\zeta$ with $\zeta^\prime$. Moreover, let $\Xi\subset\Gamma(\xi)$ and $\Xi^\prime\subset\Gamma(\xi^\prime)$ be the curve connecting $\xi$ and $\xi^\prime$ to $\zeta$ and $\zeta^\prime$, respectively. By construction, $J:=\Xi\cup\Lambda_2\cup\Xi^\prime\cup\Lambda_1$ is a closed Jordan curve lying completely in $\mathcal{N}$, cf. Figure \ref{fig:geom_visual_node_focus}.

    \proofpart{4}{$a\in\Ext(J)$}
    
    By construction, $a\not\in J$. Suppose, $a\in\Int(J)$. Since $\Gamma(\xi^\prime)\subset\mathcal{N}$, we get $\Gamma_+(\xi^\prime)\cap\Int(J)\not=\emptyset$. As $\Gamma_+(\xi^\prime)\setminus\{\xi^\prime\}$ cannot cross any orbit, it must have an intersection point with $J\setminus(\Xi\cup\Xi^\prime)=\Lambda_1\cup\Lambda_2\subset l\cup C$. But $\Gamma_-(\xi^\prime)$ has already an intersection point with $l$ and $C$. Thus, $\Gamma(\xi^\prime)$ would have two intersection points with $l$ or $C$, which is impossible. Hence, we get $a\in\Ext(J)$.
    
    \begin{figure}[ht]
		\begin{center}
            \tikzset{every picture/.style={line width=0.65pt}}
		  \begin{tikzpicture}[x=0.75pt,y=0.75pt,scale=0.45,rotate=95]
                \def\CircleWithoutContact{(502.33,355) .. controls (519.76,367.07) and (512.48,409.7) .. (495.83,437.09) .. controls (487.65,450.56) and (477.21,460.34) .. (466.33,461) .. controls (433.33,463) and (383.33,389) .. (395.9,359.13) .. controls (408.47,329.25) and (476.33,337) .. (502.33,355) -- cycle};
                \def\PathWithoutContact{(122.33,535) -- (195.33,585)};
                \def\OrbOne{(440.02,394.08) .. controls (648.33,407) and (564.33,251) .. (488.65,192.09) .. controls (383.33,103) and (146.16,547.02) .. (135.77,642.36)};
                \def\OrbTwo{(439.22,392.06) .. controls (496.33,391) and (527.33,307) .. (470.33,278) .. controls (382.33,226) and (173.33,527) .. (158.33,623)};
                \draw[gray, name path = pwc1] \PathWithoutContact node[below,xshift=6mm,yshift=-7mm] {$l$};
				\draw[vviol, name path = cwc] \CircleWithoutContact;
                \node at (450,480) {\textcolor{gray}{\textcolor{vviol}{$C$}}};
				\draw[gray, name path = pwc2] (297.96,241.58) -- (343.33,293);
				\draw[name path=orbit1, postaction={decorate}, decoration={markings, mark= at position 0.398 with {\arrowreversed{Classical TikZ Rightarrow}}, mark= at position 0.73 with {\arrowreversed{Classical TikZ Rightarrow}}}, line width=0.8, line join = round, line cap = round] \OrbOne;
                \node at (510,180) {$\Lambda$};
				\draw[name path=orbit2, postaction={decorate}, decoration={markings, mark= at position 0.3 with {\arrowreversed{Classical TikZ Rightarrow}}, mark= at position 0.78 with {\arrowreversed{Classical TikZ Rightarrow}}}, line width=0.8, line join = round, line cap = round] \OrbTwo;
				\draw [name path=separatrix, rred, postaction={decorate}, decoration={markings, mark= at position 0.1 with {\arrowreversed{Classical TikZ Rightarrow}}, mark= at position 0.45 with {\arrowreversed{Classical TikZ Rightarrow}}, mark= at position 0.85 with {\arrowreversed{Classical TikZ Rightarrow}}}, line width=0.8, line join = round, line cap = round]   (470.33,90) .. controls (370.33,111) and (62.4,628.71) .. (86.91,759.84);
                \node at (470,65) {\textcolor{rred}{$\Gamma_n$}};
                \pgfmathsetmacro{\L}{7};
                \pgfmathsetmacro{\P}{2.5};
                \fill (436.4,392.8) circle (3.4pt) node[right,xshift=0.2mm] {$a$};
				\fill[name intersections={of=orbit1 and pwc1, by={in1}}] (in1) circle (2pt);
				\pgfmathsetmacro{\AngleOne}{80};
				\draw[line width=0.1pt] ($(in1) + \L*({cos(\AngleOne-95)},{sin(\AngleOne-95)})$) -- ($(in1) + 170*({cos(\AngleOne-95)},{sin(\AngleOne-95)})$)  node[above,yshift=-0.2mm,xshift=0.3mm] {\textcolor{bblue}{$\xi^\prime$}};
				\fill[name intersections={of=orbit2 and pwc1, by={in2}}] (in2) circle (\P pt);
				\pgfmathsetmacro{\AngleTwo}{100};
				\draw[line width=0.1pt] ($(in2) + \L*({cos(\AngleTwo-95)},{sin(\AngleTwo-95)})$) -- ($(in2) + 170*({cos(\AngleTwo-95)},{sin(\AngleTwo-95)})$) node[above,yshift=-0.2mm] {\textcolor{bblue}{$\xi$}};
				\fill[name intersections={of=separatrix and pwc1, by={in3}}] (in3) circle (\P pt);
				\pgfmathsetmacro{\AngleThree}{-20};
				\draw[line width=0.1pt] ($(in3) + \L*({cos(\AngleThree-95)},{sin(\AngleThree-95)})$) -- ($(in3) + 110*({cos(\AngleThree-95)},{sin(\AngleThree-95)})$) node[right,xshift=-0.2mm] {$x$};
                \fill[name intersections={of=separatrix and pwc2, by={in4}}] (in4) circle (\P pt);
				\pgfmathsetmacro{\AngleThree}{-105};
				\draw[line width=0.1pt] ($(in4) + \L*({cos(\AngleThree-95)},{sin(\AngleThree-95)})$) -- ($(in4) + 120*({cos(\AngleThree-95)},{sin(\AngleThree-95)})$) node[below,yshift=0.2mm] {$y$};
                \fill[name intersections={of=orbit1 and pwc2, by={in5}}] (in5) circle (\P pt);
				\pgfmathsetmacro{\AngleThree}{5};
				\draw[line width=0.1pt] ($(in5) + \L*({cos(\AngleThree-95)},{sin(\AngleThree-95)})$) -- ($(in5) + 220*({cos(\AngleThree-95)},{sin(\AngleThree-95)})$) node[right,xshift=-0.2mm] {$y^\prime$};
                \fill[name intersections={of=orbit1 and cwc, by={in6}}] (in6) circle (\P pt);
				\pgfmathsetmacro{\AngleThree}{155};
				\draw[line width=0.1pt] ($(in6) + \L*({cos(\AngleThree-95)},{sin(\AngleThree-95)})$) -- ($(in6) + 160*({cos(\AngleThree-95)},{sin(\AngleThree-95)})$) node[left,xshift=0.2mm] {\textcolor{bblue}{$\zeta^\prime$}};
                \fill[name intersections={of=orbit2 and cwc, by={in7}}] (in7) circle (\P pt);
				\pgfmathsetmacro{\AngleThree}{50};
				\draw[line width=0.1pt] ($(in7) + \L*({cos(\AngleThree-95)},{sin(\AngleThree-95)})$) -- ($(in7) + 150*({cos(\AngleThree-95)},{sin(\AngleThree-95)})$) node[right,xshift=-0.2mm,yshift=0.5mm] {\textcolor{bblue}{$\zeta$}};
                \begin{scope}
                    \clip (in7) -- ($(in7) + (-20,20)$) -- (in6) -- ($(in6) + (100,0)$) -- ($(in7) + (100,0)$) -- cycle;
                    \draw[bblue,line width=0.85pt] \CircleWithoutContact;
                \end{scope}
                \begin{scope}
                    \clip (in1) -- ($(in1) + (10,-10)$) -- (in2) -- ($(in1) + (-10,20)$) -- cycle;
                    \draw[bblue,line width=0.85pt] \PathWithoutContact;
                \end{scope}
                \begin{scope}
                    \clip (in6) -- ($(in6) + (-70,-70)$) -- (in1) -- ($(in1) + (-80,-500)$) -- ($(in6) + (200,-250)$) -- ($(in6) + (200,50)$) -- cycle;
                    \draw[bblue,line width=0.85pt] \OrbOne;
                \end{scope}
                \begin{scope}
                    \clip (in7) -- ($(in7) + (-20,-30)$) -- (in2) -- ($(in2) + (-100,-400)$) -- ($(in7) + (110,-80)$) -- cycle;
                    \draw[bblue,line width=0.85pt] \OrbTwo;
                \end{scope}
                \draw[name path=orbit1, draw=none, postaction={decorate}, decoration={markings, mark= at position 0.398 with {\arrowreversed[bblue]{Classical TikZ Rightarrow}}, mark= at position 0.73 with {\arrowreversed[bblue]{Classical TikZ Rightarrow}}}, line width=0.8, line join = round, line cap = round] \OrbOne;
                \node at (510,180) {$\Lambda$};
				\draw[name path=orbit2, draw=none, postaction={decorate}, decoration={markings, mark= at position 0.3 with {\arrowreversed[bblue]{Classical TikZ Rightarrow}}, mark= at position 0.78 with {\arrowreversed[bblue]{Classical TikZ Rightarrow}}}, line width=0.8, line join = round, line cap = round] \OrbTwo;
                \fill[bblue] (in1) circle (\P pt);
                \fill[bblue] (in2) circle (\P pt);
                \fill[bblue] (in6) circle (\P pt);
                \fill[bblue] (in7) circle (\P pt);
                \fill (in3) circle (\P pt);
                \fill (in4) circle (\P pt);
                \fill (in5) circle (\P pt);
                \node at (307,405) {\textcolor{bblue}{$J$}};
		  \end{tikzpicture}
		\end{center}
		\caption{Geometrical visualization of the construction in the proof of Theorem \ref{thm:separatrices_node}, for the case where $a$ is attracting. The gray paths are transversals through $x$ and $y$ (both black), respectively. $\Gamma_n$ (red) is the separatrix. $C$ (purple) is the circle without contact with the equilibrium $a$ (black) in its interior. The interior of the closed curve $J$ (blue) is simply connected.}
		\label{fig:geom_visual_node_focus}
    \end{figure}

    \newpage
    
    \proofpart{5}{Estimating the transit time on $\Gamma_+(x)$}
    
     By Theorem \ref{thm:node_focus_basin_properties}, $\mathcal{N}$ is simply connected. By Step 3, we have $J\subset\mathcal{N}$. Moreover, by Step 4, $F$ has no zeros in $\Int(J)$. Hence, by applying the homotopy version of Cauchy's Integral Theorem, we conclude
    \begin{align*}
     \tau(\xi^\prime,\zeta^\prime)=\int\limits_{\mathclap{\Xi^\prime}}\frac{1}{F}\,\mathrm{d}z=\underbrace{\int\limits_{\mathclap{J}}\frac{1}{F}\,\mathrm{d}z}_{=0}-\int\limits_{\mathclap{\Lambda_1}}\frac{1}{F}\,\mathrm{d}z+\int\limits_{\Xi_2}\frac{1}{F}\,\mathrm{d}z-\int\limits_{\mathclap{\Lambda_2}}\frac{1}{F}\,\mathrm{d}z
    \end{align*}
	and thus
    \begin{align*}
		|\tau(\xi^\prime,\zeta^\prime)|\le \underbrace{\text{len}(\Lambda_1)}_{\le L_1}\;\underbrace{\max_{\mathclap{z\in\Lambda_1}}\frac{1}{|F(z)|}}_{\le\frac{1}{b_1}}+\left|\tau(\xi,\zeta)\right|+\underbrace{\text{len}(\Lambda_2)}_{\le L_2}\;\underbrace{\max_{\mathclap{z\in\Lambda_2}}\frac{1}{|F(z)|}}_{\le\frac{1}{b_2}}\le M\text{.}
    \end{align*}
    At this point, we realize that the estimate is valid for any $y\in\Gamma_+(x)$ and that the upper bound $M$ does not depend on the choice of $y$. Hence, by using \eqref{eq:node_focus_estimation}, it follows
    \begin{align*}
		\sup_{\mathclap{y\in\Gamma_+(x)}}\;\tau(x,y)\le\sup_{\mathclap{y\in\Gamma_+(x)}} \;|\tau(\xi^\prime,\zeta^\prime)| +\varepsilon\le M+\varepsilon\text{.}
    \end{align*}
    Since $\varepsilon$ is arbitrary, we get
    \begin{align*}
		\sup_{\mathclap{y\in\Gamma_+(x)}}\;\tau(x,y)\le M<\infty\text{.}
    \end{align*}
    By Lemma \ref{lem:separatrix_transit_times} (ii), we conclude that $\Gamma_n$ is indeed a positive separatrix.
\end{proof}

We completed the proof of Theorem \ref{thm:separatrices_node} via a step-by-step geometric construction. At this point, we note that a similar result is stated in \cite[Theorem 4.3]{broughan2003structure}. However, the proof provided there contains several substantial gaps. Our methodical approach differs in essential aspects:
\begin{enumerate}[label=(\Roman*)]
    \item \label{itm:node_gaps_1} In \cite[Theorem 4.3 (3)]{broughan2003structure}, the author additionally claims that the separatrix in case \ref{itm:node_case_A} in Theorem \ref{thm:separatrices_node} is positive \textit{and} negative, independent of the stability of the equilibrium $a$. However, no proof is provided for this assertion. In particular, it is not straightforward to adapt the final part of our proof to show the same property for $y\in\Gamma_-(x)$, since $\Lambda$ approaches the equilibrium $a$ only for $t\to\infty$. Consequently, the claim in \cite[Theorem 4.3 (3)]{broughan2003structure} turns out to be incorrect. We will propose a counterexample subsequently illustrating that the blow-up does not necessarily have to occur in \textit{both} time directions.
    \item The case of an isolated equilibrium on $\partial\mathcal{N}$ is not addressed in the 4\textsuperscript{th} step of the proof of \cite[Theorem 4.3]{broughan2003structure}. In that step, a path component $B_\lambda$ is considered, but the case $\overline{B_\lambda}=B_\lambda$ is not covered by the author. In fact, if $\overline{B_\lambda}\setminus B_\lambda=\emptyset$, i.e., if $B_\lambda$ consists of a single equilibrium, the argument in this step of the proof is not valid. This gap is closed in Proposition \ref{prop:nf_no_isolated_equilibria}, based on a detailed proof provided in the appendix of \cite{kainz2026geometry}.
    \item \label{itm:node_gaps_3} In general, the argumentation in 6\textsuperscript{th} step of the proof of \cite[Theorem 4.3]{broughan2003structure} is vague and lacks a concrete realization of the underlying idea. This issue has been resolved in our detailed proof provided above.
\end{enumerate}

\begin{example}\label{ex:conterexample_nodes}
    We now present a counterexample to the claim in \cite[Theorem 4.3 (3)]{broughan2003structure}, as described in \ref{itm:node_gaps_1}. Consider the entire vector field $F:\mathbb{C}\to\mathbb{C}$ given by
    \begin{align}\label{eq:counterexample_F}
        F(x):=x\exp(x).
    \end{align}
    By applying \cite[Theorem 3.2]{kainz2024local},  note that the only equilibrium $a=0$ of $F$ is a repelling node with basin $\mathcal{N}$.
    
    \begin{figure}[ht]
        \centering
        \includegraphics[width=0.9\textwidth]{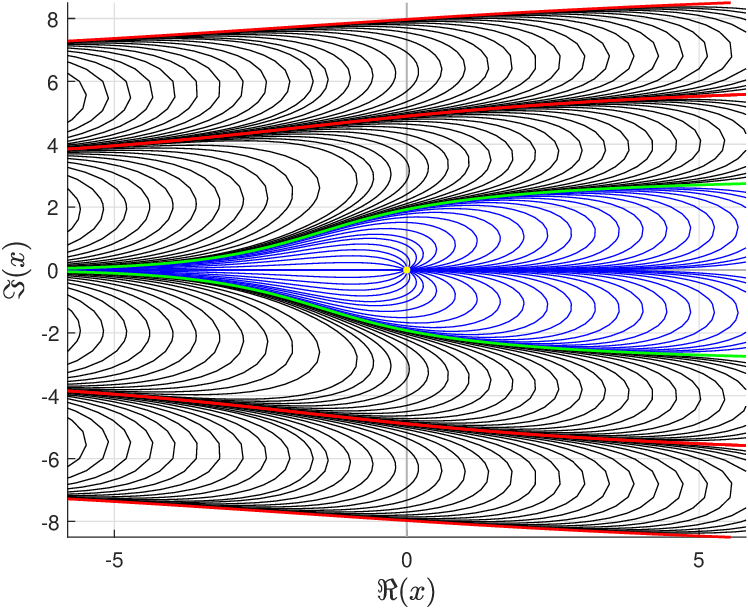}
        \caption{Local phase portrait of system \eqref{eq:planarODE} with $F(x)=xe^x$, plotted with Matlab. The equilibrium is yellow. Separatrices are red and green. The separatrices on the boundary of $\mathcal{N}$ are green. The orbits within $\mathcal{N}$ are blue. Due to the exponential term, all orbits in $\C\setminus[0,\infty)$ tend towards the left half-plane for positive time. The exponential term draws the blue and green orbits towards the negative real axis.}
        \label{fig:counterexample}
    \end{figure}
    
    By Theorem \ref{thm:separatrices_node}, the orbits on $\partial\mathcal{N}$ are negative separatrices (colored green in Figure \ref{fig:counterexample}). In particular, since $a$ is the unique equilibrium of \eqref{eq:planarODE}, case \ref{itm:node_case_A} applies: all separatrices on $\partial\mathcal{N}$ have a blow-up in negative time as they approach $+\infty$ through the right half-plane and are unbounded in positive time as they approach $-\infty$ through the left half-plane, cf. Figure \ref{fig:counterexample}. In what follows, we demonstrate that no blow-up occurs for the positive semi-orbit of $\Gamma$. By symmetry, it suffices to consider the upper green separatrix, which we denote by $\Gamma$.

    \newpage
    
    We fix the unique point $P\in\Gamma$ with the property $\Re(P)=-1$. The positive semi-orbit $\Gamma_+(P)$ is unbounded. If $x=x_1+\mathrm{i}x_2\in\overline{\mathcal{N}}$ with $x_2>0$ sufficiently small and $x_1<-1$, then a straightforward computation yields
    \begin{align}\label{eq:counterexample_F_Re_Im}
        \begin{split}
            F(x) 
             & = e^{x_1}\bigl(x_1\cos(x_2) - x_2\sin(x_2) + \mathrm{i}(x_1\sin(x_2) + x_2\cos(x_2))\bigr)\text{,}\\
            \Re(F(x)) & = e^{x_1}(x_1\cos(x_2)-x_2\sin(x_2))<x_1e^{x_1}\cos(x_2)<0\text{,}\\ 
            \Im(F(x)) & = e^{x_1}(x_1\sin(x_2)+x_2\cos(x_2))\le |x|e^{x_1}\sin(\underbrace{x_2+\arg(x)}_{\in(\pi,2\pi)})<0\text{.}
        \end{split}
    \end{align}
    
    This shows that the exponential term indeed draws the blue orbits within $\mathcal{N}$ as well as the green separatrix $\Gamma$ towards the negative real axis. Specifically, for every $y<-1$ we find a unique point $P_y\in\Gamma$ with $\Re(P_y)=y$. This observation allows the following construction: Let $\eta$ and $\eta_y$, for $y<-1$, denote the straight line segments orthogonally connecting $P$ and $P_y$, respectively, to the real axis. Let $\Gamma_y=\Gamma(P,P_y)$ be the piece of $\Gamma$ from $P$ to $P_y$. Then $J_y:=\eta\cup\Gamma_y\cup\eta_y\cup[y,-1]$ is a closed Jordan curve, cf. Figure \ref{fig:counterexample_construction}.
    
    \begin{figure}[ht]
        \begin{center}
            \begin{tikzpicture}[scale=1.1]
                 \begin{axis}[
                        axis lines=middle,
                        axis line style={-{Latex}},
                        axis on top,
                        xlabel={$\Re(x)$}, ylabel={$\Im(x)$},
                        xmin=-4.6, xmax=1.6,
                        ymin=-0.35, ymax=3.6,
                        xtick={-3,-1,1},
                        xticklabels={$y$,$-1$,$1$},
                        ytick={1,3},
                        yticklabels={$1$,$3$},
                        tick style={color=black,line width=0.85pt},
                        width=13cm, height=7cm,
                        ticklabel style={font=\small},
                        xlabel style={at={(axis cs:\pgfkeysvalueof{/pgfplots/xmax},0)},anchor=west,xshift=1pt},
                        ylabel style={at={(axis cs:0,\pgfkeysvalueof{/pgfplots/ymax})},anchor=south,yshift=1pt},
                        scale = 0.85,
                        after end axis/.code={\fill[rred] (axis cs:0,0) circle (1.3pt);},
                    ]
                    
                    \addplot[green, postaction={decorate}, decoration={markings, mark= at position 0.14 with {\arrowreversed{Classical TikZ Rightarrow}}, mark= at position 0.81 with {\arrowreversed{Classical TikZ Rightarrow}}}, line width=0.85 pt, line join = round, line cap = round] table[col sep=comma] {counterexample_construction_J.csv};
                    \addplot[bblue, line width=1 pt, line join = round, line cap = round] table[col sep=comma] {counterexample_construction_J_cut.csv};
    
                    \draw[bblue,line width=1 pt] (axis cs:-1,1.46740974227558) -- (axis cs:-1,0);
                    \draw[bblue,line width=1 pt] (axis cs:-3,0.480586597670111) -- (axis cs:-3,0);
                    \draw[bblue,line width=1 pt] (axis cs:-3,0) -- (axis cs:-1,0);
                    \fill[vviol] (axis cs:-3,0.480586597670111) circle (1.3pt);
                    \fill[vviol] (axis cs:-1,1.46740974227558) circle (1.3pt);
                    \node[green] at (axis cs:1.43,2.55) {$\Gamma$};
                    \node[bblue] at (axis cs:-0.875,0.72) {$\eta$};
                    \node[bblue] at (axis cs:-2.84,0.22) {$\eta_y$};
                    \node[bblue] at (axis cs:-2,1.23) {$\Gamma_y$};
                    \node[vviol] at (axis cs:-1,1.7) {$P$};
                    \node[vviol] at (axis cs:-3,0.75) {$P_y$};
                    \node[red] at (axis cs:0.13,0.18) {$a$};
                \end{axis}
            \end{tikzpicture}
        \end{center}
        \caption{Visualization of the construction of the closed Jordan curve $J_y$ (blue). $\Gamma_y$ (blue) connects the points $P$ and $P_y$ (purple) via the negative separatrix $\Gamma$ (green). The straight line segments $\eta$ and $\eta_y$ connect $\Gamma$ to the real axis orthogonally. The point $a$ (red) is the equilibrium. The separatrix tends towards the negative real axis.}
        \label{fig:counterexample_construction}
    \end{figure}
    
    A simple numerical computation shows that $\Im(P)<2$. Hence, by using the estimate in \eqref{eq:counterexample_F_Re_Im}, we obtain $J_y\subset[y,-1]\times[0,2]$ for all $y<-1$. This leads to 
    \begin{align*}
        \frac{1}{|F(x)|}=\frac{|e^{-x}|}{|x|}=\frac{e^{-\Re(x)}}{|x|}\le e\quad\forall\,x\in\eta
    \end{align*}
    and thus
    \begin{align}\label{eq:counterexample_eta}
        \bigg|\int\limits_{\mathclap{\eta}}\frac{1}{F}\,\mathrm{d}z\bigg|\le\text{len}(\eta)\,\max_{\mathclap{z\in\eta}}\frac{1}{|F(z)|}\le \Im(P)e<2e\text{.}
    \end{align}
    Furthermore, we calculate
    \begin{align*}
        \tau(P,P_y)=\int\limits_{\mathclap{\Gamma_y}}\frac{1}{F}\,\mathrm{d}z=\int\limits_{\mathclap{\Gamma_y}}\frac{e^{-z}}{z}\,\mathrm{d}z\text{.}
    \end{align*}
    Since $\frac{1}{F}$ is holomorphic on $\C\setminus\{0\}$ and $0\not\in\overline{\Int(J_y)}$ for all $y<-1$, the homotopy version of Cauchy’s Integral Theorem applies to $J_y$. Moreover, as $\text{len}(\eta_y)=\Im(P_y)$ tends to zero exponentially, we obtain
    \begin{align*}
        \lim_{\mathclap{y\to-\infty}}\;\tau(P,P_y)=\;\lim_{\mathclap{y\to-\infty}}\;\; \int\limits_{\mathclap{\eta}}\frac{1}{F}\,\mathrm{d}z +\int\limits_{-1}^y\frac{e^{-s}}{s}\,\mathrm{d}s\,\overset{\mathclap{\eqref{eq:counterexample_eta}}}{\ge}\,-2e+\underbrace{\int\limits_{-1}^{-\infty}\frac{e^{-s}}{s}\, \mathrm{d}s}_{\mathclap{=\int\limits_{1}^{\infty}\frac{e^{u}}{u}\,\mathrm{d}u=\infty}}=\infty\text{.}
    \end{align*}
    The last step relies on the fact that the exponential term eventually dominates the linear term in the denominator, analogous to the behavior of the exponential integral. Hence, by Lemma \ref{lem:separatrix_transit_times} (ii), $\Gamma$ cannot be a positive separatrix.\\

    We would like to point out here that there is also another approach to understanding why $\Gamma$ cannot blow up in finite positive time. For this, one has to study the complicated behavior of $F$ near the essential singularity $x=\infty$. This requires certain results deduced from the analysis of flows with complex time, cf. \cite{alvarez2017dynamics1, alvarez2024geometry, alvarez2021symmetries, heitel2019analytical}. Since this theory is very deep and a concise summary of the required results is hardly feasible within the scope of this counterexample, we chose a direct computation as examined above. Nevertheless, in what follows we briefly outline this alternative approach.
    
    The function $F$ in \eqref{eq:counterexample_F} can be seen as a complex analytic vector field on $\C$ with an essential singularity in $\infty\in\hat{\C}$, which is the Riemann sphere. In \cite[Definition 2.10 and Chapter 6.2]{alvarez2024geometry}, the authors describe the relation between real and complex time trajectories of complex analytic vector fields. A qualitative description near $a\in\hat{\C}$ and $\infty\in\hat{\C}$ is already given in \cite[Example 5.2 and Figure 6]{alvarez2024geometry}. Since $F\in\mathcal{E}(1,0,1)$, cf. \cite{alvarez2021symmetries}, we can apply \cite[Theorem 5.1]{alvarez2024geometry} to derive the existence of an hyperbolic tract over each finite asymptotic value and an elliptic tract over each infinite asymptotic value. Hence, for $\rho>0$ sufficiently small in the sense of \cite[Definition 3.2]{alvarez2024geometry}, there exist biholomorphisms $\mathcal{Y}_1:U_0(\rho)\to H$ and $\mathcal{Y}_2:U_\infty(\rho)\to E$, where $H$ and $E$ denote a hyperbolic and an elliptic sector near $\infty$, respectively, cf. \cite[Definition 4.1 (2)]{alvarez2024geometry}. In summary, this shows that the tracts with these unbounded orbits above and below the node basin in Figure \ref{fig:counterexample} are biholomophic to entire sectors, cf. \cite[Chapter 5.3.1]{alvarez2017dynamics1}. Thus, the boundary of these tracts consists of two separatrices, both tending to $\infty$ in both time directions. However, by \cite[Equation (5.3)]{alvarez2017dynamics1}, these orbits form a blow-up only in \textit{exactly one} time direction. Choosing the tract above the node basin in Figure \ref{fig:counterexample} yields a separatrix (the upper green orbit) on the boundary of the node basin, which tends to $\infty$ in both time directions but blows up only in negative time.
    
    This quantitative behavior of the orbits near infinity can also be seen from another point of view: Locally near $\infty$, there is no way to distinguish an orbit inside the node basin from one lying in the entire sector. In fact, the orbits near the set $A:=(-\infty,C]\times\{0\}\subset\C$ with $C\ll 1$ exhibit similar behavior, as parabolic sectors cannot be recognized between two local elliptic sectors. This indicates that the node does not affect the local quantitative structure (blow-up or not) of the trajectories near $A$. In other words, in this case the node cannot force the boundary orbits of the basin to blow up in finite time. Near $\infty$, the exponential term in $F$ "dominates", so that only the quantitative structure of the entire sectors remains visible.
\end{example}

\newpage

\subsection{Separatrices on the boundary of elliptic sectors}

In this section, we analyze the time behavior of orbits on the boundary of global elliptic sectors. These specific canonical regions have been introduced in \cite[Chapter 3]{kainz2026geometry}. For the sake of completeness, we give an overview of some known results about local and global elliptic sectors, which we will need later.

Local elliptic sectors have been introduced and described in detail in \cite[Chapter 4]{kainz2024local}. Roughly speaking, their geometric structure is defined by the existence of the following objects, which are illustrated in Figure \ref{fig:local_elliptic_sector}:
\phantom{.}\\
\begin{itemize}
    \item[(i)] One homoclinic orbit $\Xi$ tending to the multiple equilibrium in both time directions.
    \item[(ii)] Two orbits $\Gamma_1$ and $\Gamma_2$ attached to the multiple equilibrium.
    \item[(iii)] Two transversals $\Lambda_1$ and $\Lambda_2$ connecting $\Xi$ with $\Gamma_1$ and $\Gamma_2$, respectively.
    \item[(iv)] Two start and two end points of $\Lambda_1$ and $\Lambda_2$ denoted by $E_1,E_2\in\Xi$, $p_1\in\Gamma_1$ and $p_2\in\Gamma_2$, respectively.
\end{itemize}
\phantom{.}\\
For a local elliptic sector $S$, these objects are required to satisfy the following properties:
\phantom{.}\\
\begin{itemize}
    \item[(i)] $\partial S=\Gamma_-(p_1)\cup\Lambda_1\cup\Xi(E_1,E_2)\cup\Lambda_2\cup\Gamma_+(p_2)\cup\{a\}$.
    \item[(ii)] $\omega_+(\Gamma(x))=\omega_-(\Gamma(x))=\{a\}\;\;\forall\,x\in\Int(\Xi)$.
    \item[(iii)] $\forall\,y_1\in\Lambda_1,y_2\in\Lambda_2$:
    \begin{itemize}[label=\textbullet]
        \item $\langle F(y_1),\nu_{\Lambda_1}(y_1)\rangle>\;0$.
        \item $\langle F(y_2),\nu_{\Lambda_2}(y_2)\rangle<\;0$.
        \item $\Gamma_-(y_1)\subset S$ and $\,\omega_-(\Gamma(y_1))=\{a\}$.
        \item $\Gamma_+(y_2)\subset S$ and $\,\omega_+(\Gamma(y_2))=\{a\}$.
    \end{itemize}
\end{itemize}

\begin{figure}[ht]
    \begin{center}
        \begin{tikzpicture}
             \begin{axis}[axis lines=none, ticks=none, xmin=-0.2, xmax=2, ymin=-0.36, ymax=1.9, scale = 1]
                \pgfmathsetmacro{\L}{0.75};
                \pgfmathsetmacro{\LGray}{0.85*\L}
                \pgfmathsetmacro{\LArrows}{0.6*\L}
                \pgfmathsetmacro{\P}{1.3};
                
                \addplot[draw=none, fill=fillblue!60, opacity=0.9] table[col sep=comma] {homoclinicOrbit.csv};
                
                \addplot[color=ggreen, name path = homoclinicOrbit, postaction={decorate}, decoration={markings, mark= at position 0.25 with {\arrow{Classical TikZ Rightarrow}}, mark= at position 0.55 with {\arrow{Classical TikZ Rightarrow}}, mark= at position 0.8 with {\arrow{Classical TikZ Rightarrow}}}, line width=\L pt, line join = round, line cap = round] table[col sep=comma] {homoclinicOrbit.csv};
                \node at (0.9,0.7) [right] {\textcolor{ggreen}{$\Xi$}};
                \fill (0,0) circle (2pt) node[left,yshift=-1.4mm] {$a$};
                \path[name path=path1] (0.890285394129103,0.408775954806926) .. controls (1.25,0.1) .. (1.2,-0.2);
                \path[name path=path2] (0.530612668121931,0.747386591257611) .. controls (0.25,1.35) .. (-0.3,1.3);
                \draw[rred, name path=sep1, postaction={decorate}, decoration={markings, mark= at position 0.3 with {\arrow{Classical TikZ Rightarrow}}, mark= at position 0.85 with {\arrow{Classical TikZ Rightarrow}}}, line width=\L pt, line join = round, line cap = round] (0,0) .. controls (1.2,-0.2) .. (1.9,0.05);
                \draw[rred, name path=sep2, postaction={decorate}, decoration={markings, mark= at position 0.3 with {\arrowreversed{Classical TikZ Rightarrow}}, mark= at position 0.87 with {\arrowreversed{Classical TikZ Rightarrow}}}, line width=\L pt, line join = round, line cap = round] (0,0) .. controls (-0.15,0.9) .. (0.15,1.8);
                \fill[name intersections={of=sep1 and path1, by={in1}}] (in1) circle (\P pt) node[below,yshift=-0.3mm] {$p_1$};
                \fill[name intersections={of=sep2 and path2, by={in2}}] (in2) circle (\P pt) node[left] {$p_2$};
                
                \input{homoclinicOrbitE1Lambda1}; 
                \addplot [fillblue!60, opacity=0.9] fill between [of=top1 and sep1];
                \path[fill=white] (in1) --  (1.3,1) -- (1.9,0.05) -- (1.6,-0.25) -- cycle;
                
                \input{homoclinicOrbitE2Lambda2}; 
                \addplot [fillblue!60, opacity=0.9] fill between [of=top2 and sep2];
                
                \draw[vviol,line width=\LGray pt, line join = round, line cap = round] (0.890285394129103,0.408775954806926) .. controls (1.2,0.2) .. (in1);
                \draw[vviol,line width=\LGray pt, line join = round, line cap = round] (0.530612668121931,0.747386591257611) .. controls (0.35,1.2) .. (in2);
                \draw[-{Classical TikZ Rightarrow}, line width=\LArrows pt, line join = round, line cap = round] (0.89,0.12) .. controls (1.14,0.32) .. (1.18,0.6);
                \draw[-{Classical TikZ Rightarrow}, line width=\LArrows pt, line join = round, line cap = round] (0.98,-0.04) .. controls (1.4,0) .. (1.6,0.22);
                \draw[-{Classical TikZ Rightarrow}, line width=\LArrows pt, line join = round, line cap = round] (0.4,1.5) .. controls (0.17,1.33) .. (0.07,0.95);
                \draw[-{Classical TikZ Rightarrow}, line width=\LArrows pt, line join = round, line cap = round] (0.8,0.98) .. controls (0.51,1.02) .. (0.24,0.74);
                \fill (0.890285394129103,0.408775954806926) circle (\P pt) node[left,yshift=1mm,xshift=0.3mm] {$E_1$};
                \fill (0.530612668121931,0.747386591257611) circle (\P pt) node[below,xshift=0.8mm] {$E_2$};
                \node at (1.15,0.22) [right] {\textcolor{vviol}{$\Lambda_1$}};
                \node at (0.33,1.19) [right] {\textcolor{vviol}{$\Lambda_2$}};
                \node at (1.88,0.04) [above] {\textcolor{rred}{$\Gamma_1$}};
                \node at (0.14,1.79) [right] {\textcolor{rred}{$\Gamma_2$}};
                \node at (0.09,0.67) [below] {\textcolor{bblue}{$S$}};
                
                \draw[rred, name path=sep1, postaction={decorate}, decoration={markings, mark= at position 0.3 with {\arrow{Classical TikZ Rightarrow}}, mark= at position 0.85 with {\arrow{Classical TikZ Rightarrow}}}, line width=\L pt, line join = round, line cap = round] (0,0) .. controls (1.2,-0.2) .. (1.9,0.05);
                \draw[rred, name path=sep2, postaction={decorate}, decoration={markings, mark= at position 0.3 with {\arrowreversed{Classical TikZ Rightarrow}}, mark= at position 0.87 with {\arrowreversed{Classical TikZ Rightarrow}}}, line width=\L pt, line join = round, line cap = round] (0,0) .. controls (-0.15,0.9) .. (0.15,1.8);
                
                \fill (in1) circle (\P pt);
                \fill (in2) circle (\P pt);
                \fill (0,0) circle (2pt);
            \end{axis}
        \end{tikzpicture}
    \end{center}
    \caption{\cite[Fig. 1]{kainz2026geometry} Geometrical objects of a local elliptic sector $S$ (light blue) with counterclockwise direction in a multiple equilibrium $a$ (black). $\Gamma_1$ and $\Gamma_2$ (red) are the boundary orbits of $S$. $\Lambda_1$ and $\Lambda_2$ (purple) are the transversals. $\Xi=\Gamma(E_1)$ (green) is the homoclinic orbit. The black arrows indicate the direction of the vector field.}
    \label{fig:local_elliptic_sector}
\end{figure}

A finite elliptic decomposition (FED) can be obtained by cyclically copying the geometry in Figure \ref{fig:local_elliptic_sector} around the equilibrium. We established the existence of a FED of order $2m-2$ in a multiple equilibrium $a$ with order $m\ge 2$, cf. \cite[Proposition 4.3]{kainz2024local} and \cite[Theorem 4.4]{kainz2024local}. In particular, we showed that each local elliptic sector of this decomposition has adjacent definite directions given by
\begin{align*}
    \mathcal{E}(F,m)=\left\{\frac{\ell\pi-\arg(F^{(m)}(a))}{m-1}\mod 2\pi:\ell\in\mathbb{Z}\right\}\subset[0,2\pi)\text{.}
\end{align*}
This summary allows us to define the global elliptic sector as follows.
\begin{definition}[{\cite[Definition 3.1]{kainz2026geometry}}]\label{def:globalSector}
    \leavevmode
    \begin{itemize}
        \item[(i)] Let $\Xi\subset\C\setminus\{a\}$ be a homoclinic orbit in $a$, i.e. $\omega_+(\Xi)=\omega_-(\Xi)=\{a\}$. $\Xi$ is a \textit{sector-forming orbit} in $a$, if for all $z\in\Int(\Xi\cup\{a\})$ the orbit $\Gamma(z)$ is also homoclinic in $a$.\footnote{A construction of a parameterization for the closed Jordan curve $\Xi\cup\{a\}$ with compact time interval can be found in \cite[Remark 4.2]{kainz2024local}.}
        \item[(ii)] Let $\Xi$ be a sector-forming orbit in $a$. The \textit{global elliptic sector} $\mathcal{S}(\Xi)$ of $F$ in $a$ with respect to $\Xi$ is
        \begin{align*}
            \mathcal{S}(\Xi):=\underbrace{\Xi\cup\Int(\Xi\cup\{a\})}_{=\overline{\Int(\Xi\cup\{a\})}\setminus\{a\}}\cup\,\mathcal{S}^\prime(\Xi)
        \end{align*}
        with
        \begin{align*}
             \mathcal{S}^\prime(\Xi):=\big\{x\in\C:\,&\Gamma(x)\text{ is homoclinic in }a,\\
             &\Xi\subset\Int(\Gamma(x)\cup\{a\})\big\}\text{.}
        \end{align*}
    \end{itemize}
\end{definition}

\begin{theorem}\label{thm:sector_basin_properties}
    Let $F\in\Hol{}(\C{})$, $F\not\equiv0$, be entire and $a\in\C$ an equilibrium of \eqref{eq:planarODE} with order $m\in\N\setminus\{1\}$. Let $\mathcal{S}:=\mathcal{S}(\Xi)$ be a global elliptic sector generated by the sector-forming orbit $\Xi$. Then:
    \begin{itemize}
        \item[(i)] $\mathcal{S}$ and $\partial\mathcal{S}$ are flow-invariant.
        \item[(ii)] $\partial\mathcal{S}\cap F^{-1}(\{0\})=\{a\}$.
        \item[(iii)] $\mathcal{S}$ is open, simply connected and unbounded.
        \item[(iv)] All orbits on $\partial\mathcal{S}\setminus\{a\}$ are unbounded.
        \item[(v)] All orbits in $\mathcal{S}$ are nested and
            \begin{align}\label{eq:sector_union}
                \mathcal{S}=\bigcup_{\mathclap{x\in\mathcal{S}}}\Int(\Gamma(x)\cup\{a\})=\bigcup_{\mathclap{x\in\mathcal{S}}} \overline{\Int(\Gamma(x)\cup\{a\})}\setminus\{a\}\text{.}
            \end{align}
        \item[(vi)] $\mathcal{S}$ does not depend on the particular choice of a sector-forming orbit, that is, for $x,y\in\mathcal{S}$, we have $\mathcal{S}(\Gamma(x))=\mathcal{S}(\Gamma(y))=\mathcal{S}$.
    \end{itemize}
\end{theorem}
\begin{proof}
    We established these geometrical properties in \cite[Chapter 3]{kainz2026geometry}.
\end{proof}

\newpage

\begin{theorem}[{\cite[Corollary 3.11]{kainz2026geometry}}]\label{thm:sector_number}
    Let $F\in\Hol{}(\C{})$, $F\not\equiv0$, be entire and $a\in\C$ an equilibrium of \eqref{eq:planarODE} with order $m\in\N\setminus\{1\}$.
    \begin{itemize}
        \item[(i)] All homoclinic orbits in $a$ are sector-forming orbits.
        \item[(ii)] There exist exactly $2m-2$ distinct global elliptic sectors in $a$, each located between two adjacent definite directions given by $\mathcal{E}(F,m)$.
    \end{itemize}
\end{theorem}
\begin{proof}
    This is \cite[Corollary 3.11]{kainz2026geometry}.
\end{proof}

The following Theorem corresponds to \cite[Theorem 4.2]{broughan2003structure}. In that work, however, the author does not provide a definition of the elliptic sector, whose geometry he analyzes. Furthermore, similar to the issue described in \ref{itm:node_gaps_3}, Step 8 of the proof of \cite[Theorem 4.2]{broughan2003structure} offers only a sketch rather than a fully developed argumentation. In what follows, we present a detailed proof based on the geometric structure of the elliptic sector summarized above.

\begin{theorem}[Separatrix configuration of global elliptic sectors {\cite[Theorem 4.2]{broughan2003structure}}]\label{thm:separatrices_sectors}
	Let $F\in\Hol{}(\C{})$, $F\not\equiv0$, be entire and $a\in\C$ an equilibrium of \eqref{eq:planarODE} with order $m\in\N\setminus\{1\}$. Let $\mathcal{S}:=\mathcal{S}(\Xi)$ be a global elliptic sector generated by the sector-forming orbit $\Xi$. Then $\partial\mathcal{S}$ consists of $a$, two separatrices $\Gamma_1,\Gamma_2$ satisfying $\omega_-(\Gamma_1)=\omega_+(\Gamma_2)=\{a\}$ and at most countably many separatrices, i.e. there exists an index set $\mathcal{Q}\subset\N$ and separatrices $C_n\subset\partial\mathcal{S}$, $n\in\mathcal{Q}$, such that
    \begin{align}\label{eq:separatrix_sector_union}
		\partial\mathcal{S}=\{a\}\cup\Gamma_1\cup\Gamma_2\cup\bigcup_{\mathclap{n\in\mathcal{Q}}}C_n\text{.}
    \end{align}
	In particular, $\Gamma_1$ is a positive and $\Gamma_2$ is a negative separatrix. Moreover, for all $n\in\mathcal{Q}$ the orbit $C_n$ is a double-sided separatrix.
\end{theorem}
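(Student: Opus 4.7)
The plan is to follow the structural template of the proof of Theorem \ref{thm:separatrices_node}, adapted to the fact that the multiple equilibrium $a$ sits on the boundary of $\mathcal{S}$ and that interior orbits of $\mathcal{S}$ are homoclinic to $a$ rather than asymptotic to it. The argument splits naturally into a decomposition step, a one-sided-separatrix step for $\Gamma_1,\Gamma_2$, and a double-sided-separatrix step for the $C_n$.

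First I would combine Lemma \ref{lem:countable_orbits} with the description of the boundary of a global elliptic sector from \cite{kainz2024basins}: $\partial\mathcal{S}$ consists of the equilibrium $a$ together with at most countably many unbounded orbits. Exactly two of those orbits---the characteristic separatrices tangent to the definite directions $\theta_+$ and $\theta_-$ at $a$---have $a$ in their limit sets; these are $\Gamma_1,\Gamma_2$, and the identities $w_-(\Gamma_1)=w_+(\Gamma_2)=\{a\}$ are forced by $\lambda(\theta_+)=-1$ and $\lambda(\theta_-)=+1$. The remaining boundary orbits form a family $\{C_n\}_{n\in\mathcal{Q}}$, each staying a positive distance from $a$, and \eqref{eq:separatrix_sector_union} follows.

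Second, to show that $\Gamma_1$ is a positive separatrix I would transport Steps 2--5 of the proof of Theorem \ref{thm:separatrices_node} almost verbatim. Fix $x\in\Gamma_1$ and an arbitrary $y\in\Gamma_+(x)$; choose a circle without contact $C$ around $a$ and a path without contact $l$ through $x$ as in \cite{andronov1973qualitativetheory}; pick $\xi\in l\cap\mathcal{S}$, let $\zeta=\Gamma(\xi)\cap C$, and define the constant $M$ built from the lengths of $l,C$, the infima of $|F|$ on them and $|\tau(\xi,\zeta)|$, exactly as in Step 2 of Theorem \ref{thm:separatrices_node}. Proposition \ref{prop:boundary_approximation} then produces nearby points $\xi'\in l\cap\mathcal{S}$, $y'=\Phi(\tau(x,y),\xi')$ on an interior orbit $\Lambda\subset\mathcal{S}$, and the intersection $\zeta'\in\Lambda\cap C$; the pieces $\Gamma_1(x,y)$, $\Lambda(\xi',\zeta')$ together with short arcs on $l$ and $C$ bound a Jordan curve $J\subset\overline{\mathcal{S}}$. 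By the same argument as in Step 4 of Theorem \ref{thm:separatrices_node}, $a\in\Ext(J)$, so $F$ is zero-free in $\Int(J)$, and Cauchy's integral theorem gives $|\tau(\xi',\zeta')|\le M$, whence $\sup_{y\in\Gamma_+(x)}\tau(x,y)\le M<\infty$. Lemma \ref{lem:separatrix_transit_times} (ii) then concludes that $\Gamma_1$ is a positive separatrix. The corresponding statement for $\Gamma_2$ follows by applying the argument to $-F$.

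Third, for each $C_n$ I would show $\tau(C_n)<\infty$, which by Lemma \ref{lem:separatrix_transit_times} (i) makes $C_n$ double-sided. Fix $x,y\in C_n$ with $\tau(x,y)>0$. Because $C_n$ stays bounded away from $a$, the compact arc $C_n(x,y)$ admits a tubular neighbourhood $O\subset\overline{\mathcal{S}}$ disjoint from $F^{-1}(\{0\})$ and homeomorphic to a simply connected quadrangle. Proposition \ref{prop:boundary_approximation} yields an interior orbit $\Lambda\subset\mathcal{S}$ and points $x',y'\in\Lambda$ with $|\tau(x',y')-\tau(x,y)|<\varepsilon$, and the Jordan curve assembled from $C_n(x,y)$, $\Lambda(x',y')$ and two short connecting segments lies entirely in $O$. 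Cauchy's integral theorem then gives $|\tau(x',y')|\le|\tau(x,y)|+O(\varepsilon)$. To bound $|\tau(x,y)|$ uniformly as $x,y$ range over $C_n$, I would exploit the sector-forming orbit $\Xi$ together with the simple connectedness of $\mathcal{S}$ from \cite{kainz2024basins}: for fixed reference points $p_1,p_2\in\Xi$, a suitable comparison path in $\mathcal{S}$ from $p_1$ to $p_2$ that brackets $C_n(x,y)$ forces, via Cauchy, the estimate $|\tau(x,y)|\le|\tau(\Xi(p_1,p_2))|+M$ independently of $x,y$. Taking the supremum and invoking Lemma \ref{lem:transit_time_supremum} then yields $\tau(C_n)<\infty$.

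The main obstacle is precisely this uniform bound in the third step. Unlike the center case---where the period $T(a)=2\pi\mathrm{i}/F'(a)$ furnishes a natural ceiling valid on every approximating periodic orbit---the approximating interior orbits $\Lambda$ are homoclinic and have infinite total transit time, so the contribution spent near $a$ has to be separated from the bulk contribution running close to $C_n$. Realising this separation rigorously---most plausibly by augmenting the Jordan-curve construction with a small disk around $a$ and by replacing the in/out arcs of $\Lambda$ near $a$ by comparison arcs along the sector-forming orbit $\Xi$---is where the technical weight of the argument lies and, presumably, where the proof of the corresponding statement in \cite[Theorem 4.2]{broughan2003structure} needs to be filled in.
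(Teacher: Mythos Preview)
Your decomposition step is fine and matches the paper, but the second step contains a genuine gap: you cannot transplant the node/focus argument verbatim because a \emph{circle without contact around $a$} does not exist for a multiple equilibrium. The construction in \cite[\S7]{andronov1973qualitativetheory} is specific to simple equilibria; at a zero of order $m\ge 2$ the local phase portrait is a finite elliptic decomposition, and every interior orbit $\Lambda\subset\mathcal{S}$ is homoclinic to $a$. Such an orbit meets any small closed curve around $a$ at least twice, so ``$\zeta=\Gamma(\xi)\cap C$'' is ill-defined, the inclusion $\zeta'\in\Gamma_+(y')$ does not follow, and the Jordan curve $J$ you describe cannot be assembled. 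The same missing ingredient is exactly why your third step runs into the obstacle you flag: the approximating orbits have infinite transit time near $a$, and without a device that \emph{cuts off the near-$a$ part uniformly} you cannot extract a bound independent of $x,y$.

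The paper replaces the circle without contact by the \emph{two transversal arcs} $\Lambda_1,\Lambda_2$ supplied by the FED in \cite[Definition~4.1]{kainz2024local}: $\Lambda_j$ joins a fixed point $E_j$ on a reference homoclinic orbit $\hat\Xi\subset\mathcal{S}$ to a fixed point $p_j\in\Gamma_j$, and every orbit in $\mathcal{S}\cap\Ext(\hat\Xi\cup\{a\})$ crosses each $\Lambda_j$ exactly once. The single constant
\[
M=|\tau(E_1,E_2)|+\frac{\mathrm{len}(\Lambda_1)}{\min_{\Lambda_1}|F|}+\frac{\mathrm{len}(\Lambda_2)}{\min_{\Lambda_2}|F|}
\]
then bounds $|\tau(\xi_1,\xi_2)|$ for the intersection points $\xi_j=\Lambda\cap\Lambda_j$ of \emph{any} approximating orbit $\Lambda$, via Cauchy applied to the Jordan curve $\hat\Xi(E_1,E_2)\cup\Psi_1\cup\Lambda(\xi_1,\xi_2)\cup\Psi_2$ (with $\Psi_j\subset\Lambda_j$), which avoids $a$ by construction. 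This simultaneously handles $\Gamma_1$, $\Gamma_2$ and every $C_n$ with the \emph{same} $M$, so the uniform bound you were worried about in the third step comes for free once the right transversal structure is in place.
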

\begin{proof}
    By Proposition \ref{prop:countable_orbits} and Theorem \ref{thm:sector_basin_properties}, $\partial\mathcal{S}$ consists of $a$ and countably many unbounded orbits. We can indeed find a countable index set $\mathcal{Q}$ such that \eqref{eq:separatrix_sector_union} holds. In particular, the geometry near $a$ described above ensures that $\Gamma_1$ and $\Gamma_2$ exist and are unique. It remains to show that $\Gamma_1$ is a positive, $\Gamma_2$ a negative and for every $n\in\mathcal{Q}$ the orbit $C_n$ a double-sided separatrix.
    
    \proofpart{1}{Applying the geometry of the FED in $a$}
    
    Using the geometry of a local elliptic sector in $\mathcal{S}$ with boundary orbits $\Gamma_1$ and $\Gamma_2$, there exists a homoclinic sector-forming orbit $\hat{\Xi}\subset\mathcal{S}=\mathcal{S}(\Xi)$ as well as two continuously differentiable curves $\Lambda_1,\Lambda_2\subset\overline{\mathcal{S}}$ connecting $\hat{\Xi}$ to $\Gamma_1$ and $\Gamma_2$, respectively. The two curves are nowhere tangential to $F$. We denote the start and end points of $\Lambda_1$ and $\Lambda_2$ by $E_1,E_2\in\hat{\Xi}$, $p_1\in\Gamma_1$ and $p_2\in\Gamma_2$, respectively. All orbits in $\mathcal{S}\cap\Ext(\hat{\Xi}\cup\{a\})$ cross $\Lambda_1$ as well as $\Lambda_2$ exactly once. We assume w.l.o.g. that $\mathcal{S}$ has counterclockwise direction, i.e. the situation in Figure \ref{fig:local_elliptic_sector} occurs. Let $\kappa:=\Gamma_-(p_1)\cup\Lambda_1\cup\hat{\Xi}(E_1,E_2)\cup\Lambda_2\cup\Gamma_+(p_2)\cup\{a\}$ be the closed piecewise continuously differentiable Jordan curve defining the boundary of the local elliptic sector in $\mathcal{S}$, cf. Figure \ref{fig:local_elliptic_sector}.

    \proofpart{2}{Finding an appropriate upper bound}
    
    We fix a point $x\in\partial\mathcal{S}\cap\Ext(\kappa)$ and choose $r>0$ small enough such that $\mathcal{B}_r(x)\subset\Ext(\kappa)$ and $\mathcal{B}_r(x)\cap F^{-1}(\{0\})=\emptyset$. In particular, either $x\in C_n$ with $n\in\mathcal{Q}$, or $x\in\Gamma_+(p_1)\setminus\{p_1\}$, if $x\in\Gamma_1$, or $x\in\Gamma_-(p_2)\setminus\{p_2\}$, if $x\in\Gamma_2$. For $j\in\{1,2\}$, we define $L_j:=\text{len}(\Lambda_j)>0$ as the length of $\Lambda_j$ as well as
    \begin{align*}
		b_j:=\min\left\{|F(z)|:z\in \Lambda_j\right\}>0\text{.}
    \end{align*}
    Then the number
    \begin{align*}
		M:=|\tau(E_1,E_2)|+\frac{L_1}{b_1}+\frac{L_2}{b_2}>0\text{.}
    \end{align*}
    will be an appropriate upper bound for the transit time on $\Gamma(x)$.
    
    \proofpart{3}{Applying Proposition \ref{prop:boundary_approximation} for the case $x\in\Gamma_+(p_1)\subset\partial\mathcal{S}\cap\Ext(\kappa)$}
    
	Let $y\in\Gamma_+(x)$ and $\varepsilon\in(0,\frac{r}{2})$ be arbitrary. By Proposition \ref{prop:boundary_approximation}, there exists $\delta\in(0,\varepsilon]$ such that $\mathcal{B}_\delta(x)\cap\mathcal{B}_\delta(y)=\emptyset$ and for all orbits $\Lambda\subset\mathcal{S}$ satisfying $\mathcal{B}_\delta(x)\cap\Lambda\not=\emptyset$ and $\mathcal{B}_\delta(y)\cap\Lambda\not=\emptyset$ it holds that
	\begin{align*}
		|\tau(x^\prime,y^\prime)-\tau(x,y)|<\varepsilon\quad\forall\,x^\prime\in\mathcal{B}_{\delta}(x)\cap\Lambda,\;\forall\,y^\prime\in\mathcal{B}_{\delta}(y)\cap\Lambda\text{.}
	\end{align*}
	By continuity of the flow, cf. \cite[Chapter 2.4, Theorem 4]{perko2001differential}, there exists $\tilde{\delta}\in(0,\delta]$ such that $|\Phi(\tau(x,y),z_0)-y|<\delta$ for all $z_0\in\mathcal{B}_{\tilde{\delta}}(x)$. Since $x\in\partial\mathcal{S}$, there exists a point $z_0\in\mathcal{B}_{\tilde{\delta}}(x)\cap\mathcal{S}$, i.e. with $\Lambda:=\Gamma(z_0)\subset\mathcal{S}$, $x^\prime:=z\in\mathcal{B}_{\delta}(x)$ and $y^\prime:=\Phi(\tau(x,y),z_0)\in\mathcal{B}_{\delta}(y)$ we can apply Proposition \ref{prop:boundary_approximation}. By applying our results in Step 1, there exist $\xi_1,\xi_2\in\mathcal{S}$, which are the intersection points of $\Lambda$ with $\Lambda_1$ and $\Lambda_2$, respectively. We have $\xi_1\in\Gamma_-(x^\prime)$ and $\xi_2\in\Gamma_+(y^\prime)$. For $j\in\{1,2\}$, let $\Psi_j\subset\Lambda_j$ be the curve connecting $\xi_j$ to $E_j$. By construction, $J:=\hat{\Xi}(E_1,E_2)\cup\Psi_1\cup\Lambda(\xi_1,\xi_2)\cup\Psi_2$ is a closed Jordan curve lying completely in $\Lambda\cup\Int(\Lambda\cup\{a\})$. By using equation \eqref{eq:sector_union}, we conclude $\overline{\Int(J)}\subset\mathcal{S}$, cf. Figure \ref{fig:geom_visual_elliptic_sector_approximation}.
    
    \begin{figure}[ht]
		\begin{center}
		      \begin{tikzpicture}
                \begin{axis}[axis lines=none, ticks=none, xmin=-0.3, xmax=3.5, ymin=-0.67, ymax=1.9, width=10cm]
                    \pgfmathsetmacro{\L}{0.8};
                    \pgfmathsetmacro{\LGray}{0.85*\L}
                    \pgfmathsetmacro{\Llabel}{0.35*\L}
                    \pgfmathsetmacro{\P}{1.2};
                    \addplot[draw=none, fill=fillblue!60, opacity=0.9] table[col sep=comma] {homoclinicOrbit.csv};
                    \addplot[ggreen, postaction={decorate}, decoration={markings, mark= at position 0.18 with {\arrow{Classical TikZ Rightarrow}}, mark= at position 0.8 with {\arrow{Classical TikZ Rightarrow}}, mark= at position 0.8 with {\arrow{Classical TikZ Rightarrow}}}, line width=\L pt, line join = round, line cap = round] table[col sep=comma] {homoclinicOrbit.csv};
                    \fill (0,0) circle (2pt) node[left,yshift=-1.4mm] {$a$};
                    \path[name path=path1] (0.890285394129103,0.408775954806926) .. controls (1.25,0.1) .. (1.2,-0.3);
                    \path[name path=path2] (0.530612668121931,0.747386591257611) .. controls (0.25,1.35) .. (-0.3,1.3);
                    \draw[rred, name path=sep1, postaction={decorate}, decoration={markings, mark= at position 0.3 with {\arrow{Classical TikZ Rightarrow}}, mark= at position 0.87 with {\arrow{Classical TikZ Rightarrow}}}, line width=\L pt, line join = round, line cap = round] (0,0) .. controls (1,-0.3) .. (3.3,-0.06);
                    \draw[rred, name path=sep2, postaction={decorate}, decoration={markings, mark= at position 0.3 with {\arrowreversed{Classical TikZ Rightarrow}}, mark= at position 0.87 with {\arrowreversed{Classical TikZ Rightarrow}}}, line width=\L pt, line join = round, line cap = round] (0,0) .. controls (-0.15,0.9) .. (0.15,1.8);
                    \def\ApproxiOrbit{(0,0) .. controls (0.14,0) and (0.67,-0.12) .. (1.03,-0.12) .. controls (1.39,-0.12) and (2.17,-0.07) .. (2.6,0.01) .. controls (3.03,0.08) and (2.64,0.69) .. (2.41,0.91) .. controls (2.19,1.13) and (1.9,1.3) .. (1.52,1.35) .. controls (1.13,1.41) and (0.83,1.41) .. (0.42,1.09) .. controls (0,0.77) and (-0.03,0.18) .. (0,0)};
                    \draw[smooth, name path=approxOrb, postaction={decorate}, decoration={markings, mark= at position 0.08 with {\arrow{Classical TikZ Rightarrow}}, mark= at position 0.445 with {\arrow[color=yyellow]{Classical TikZ Rightarrow}}, mark= at position 0.695 with {\arrow[color=yyellow]{Classical TikZ Rightarrow}}, mark= at position 0.91 with {\arrow{Classical TikZ Rightarrow}}}, line width=\L pt, line join = round, line cap = round] \ApproxiOrbit;
                    \fill[name intersections={of=sep1 and path1, by={in1}}] (in1) circle (\P pt);
                    \fill[name intersections={of=sep2 and path2, by={in2}}] (in2) circle (\P pt);
                    
                    \input{homoclinicOrbitE1Lambda1}; 
                    \addplot [fillblue!60, opacity=0.9] fill between [of=top1 and sep1];
                    \path[fill=white] (in1) -- (1.3,0.5) -- (3.38,0.05) -- (3.38,-0.25) -- cycle;
                    
                    \input{homoclinicOrbitE2Lambda2}; 
                    \addplot [fillblue!60, opacity=0.9] fill between [of=top2 and sep2];
                    
                    \def\PathOne{(0.890285394129103,0.408775954806926) .. controls (1.2,0.2) .. (in1)};
                    \draw[vviol, name path = path11, line width=\LGray pt, line join = round, line cap = round] \PathOne;
                    \def\PathTwo{(0.530612668121931,0.747386591257611) .. controls (0.35,1.2) .. (in2)};
                    \draw[vviol, name path = path22, line width=\LGray pt, line join = round, line cap = round] \PathTwo;
                    \draw[gray, name path = pwcleft, line width=\LGray pt, line join = round, line cap = round] (1.55,-0.34) -- (1.65,0.07);
                    \draw[gray, name path = pwcright, line width=\LGray pt, line join = round, line cap = round] (2.6,-0.25) -- (2.38,0.12);
                    \fill[name intersections={of=approxOrb and path11, by={in3}}] (in3) circle (\P pt);
                    \fill[name intersections={of=approxOrb and path22, by={in4}}] (in4) circle (\P pt);
                    \fill[name intersections={of=sep1 and pwcleft, by={in5}}] (in5) circle (\P pt);
                    \fill[name intersections={of=approxOrb and pwcleft, by={in6}}] (in6) circle (\P pt);
                    \fill[name intersections={of=sep1 and pwcright, by={in7}}] (in7) circle (\P pt);
                    \fill[name intersections={of=approxOrb and pwcright, by={in8}}] (in8) circle (\P pt);
                    \fill[yyellow] (0.890285394129103,0.408775954806926) circle (\P pt);
                    \fill[yyellow] (0.530612668121931,0.747386591257611) circle (\P pt);
                    \begin{scope}
                        \clip (0.890285394129103,0.408775954806926) -- (0.530612668121931,0.747386591257611) -- (0.7,1) -- (1.5,0.5) -- cycle;
                        \addplot[yyellow, line width=\L pt, line join = round, line cap = round] table[col sep=comma] {homoclinicOrbit.csv};
                    \end{scope}
                    \begin{scope}
                        \clip (0.890285394129103,0.408775954806926) -- (0.8,0.1) -- (in3) -- (1.5,0.3) -- cycle;
                        \draw[yyellow,line width=\LGray pt, line join = round, line cap = round] \PathOne;
                    \end{scope}
                    \begin{scope}
                        \clip (0.530612668121931,0.747386591257611) -- (0.35,0.8) -- (in4) -- (0.8,1) -- cycle;
                        \draw[yyellow,line width=\LGray pt, line join = round, line cap = round] \PathTwo;
                    \end{scope}
                    \begin{scope}
                        \clip (in3) -- (0.3,0.2) -- (in4) -- (0.65,1.7) -- (3.2,1.6) -- (3,-0.2) -- (1.6,-0.35) -- cycle;
                        \draw[yyellow,line width=\L pt, line join = round, line cap = round] \ApproxiOrbit;
                    \end{scope}
                    \pgfmathsetmacro{\AngleOne}{243};
                    \draw[line width=\Llabel pt] ($(in1) + 0.12*({cos(\AngleOne)},{sin(\AngleOne)})$) -- ($(in1) + 1.5*({cos(\AngleOne)},{sin(\AngleOne)})$) node[left,xshift=1mm,yshift=-0.5mm] {$p_1$};
                    \pgfmathsetmacro{\AngleTwo}{12};
                    \draw[line width=\Llabel pt] ($(in2) + 0.022*({cos(\AngleTwo)},{sin(\AngleTwo)})$) -- ($(in2) + 0.37*({cos(\AngleTwo)},{sin(\AngleTwo)})$) node[right,xshift=-0.5mm,yshift=0.5mm] {$p_2$};
                    \pgfmathsetmacro{\AngleThree}{30};
                    \draw[line width=\Llabel pt] ($(in3) + 0.019*({cos(\AngleThree)},{sin(\AngleThree)})$) -- ($(in3) + 0.4*({cos(\AngleThree)},{sin(\AngleThree)})$) node[right,xshift=-0.6mm,yshift=0.9mm] {\textcolor{yyellow}{$\xi_1$}};
                    \pgfmathsetmacro{\AngleFour}{300};
                    \draw[line width=\Llabel pt] ($(in4) + 0.045*({cos(\AngleFour)},{sin(\AngleFour)})$) -- ($(in4) + 1.05*({cos(\AngleFour)},{sin(\AngleFour)})$) node[right,xshift=-0.5mm,yshift=0.1mm] {\textcolor{yyellow}{$\xi_2$}};
                    \pgfmathsetmacro{\AngleFive}{280};
                    \draw[line width=\Llabel pt] ($(in5) + 0.06*({cos(\AngleFive)},{sin(\AngleFive)})$) -- ($(in5) + 0.7*({cos(\AngleFive)},{sin(\AngleFive)})$) node[right,xshift=-0.5mm] {$x$};
                    \pgfmathsetmacro{\AngleSix}{15};
                    \draw[line width=\Llabel pt] ($(in6) + 0.021*({cos(\AngleSix)},{sin(\AngleSix)})$) -- ($(in6) + 0.24*({cos(\AngleSix)},{sin(\AngleSix)})$) node[right,xshift=-0.5mm,yshift=0.9mm] {$x^\prime$};
                    \pgfmathsetmacro{\AngleSeven}{280};
                    \draw[line width=\Llabel pt] ($(in7) + 0.07*({cos(\AngleSeven)},{sin(\AngleSeven)})$) -- ($(in7) + 1*({cos(\AngleSeven)},{sin(\AngleSeven)})$) node[right,xshift=-0.5mm] {$y$};
                    \pgfmathsetmacro{\AngleEight}{85};
                    \draw[line width=\Llabel pt] ($(in8) + 0.016*({cos(\AngleEight)},{sin(\AngleEight)})$) -- ($(in8) + 0.15*({cos(\AngleEight)},{sin(\AngleEight)})$) node[above,xshift=0.4mm,yshift=-0.3mm] {$y^\prime$};
                    \pgfmathsetmacro{\AngleEOne}{-7};
                    \draw[line width=\Llabel pt] ($(0.890285394129103,0.408775954806926) + 0.021*({cos(\AngleEOne)},{sin(\AngleEOne)})$) -- ($(0.890285394129103,0.408775954806926) + 0.42*({cos(\AngleEOne)},{sin(\AngleEOne)})$) node[right,xshift=-0.7mm,yshift=0.3mm] {\textcolor{yyellow}{$E_1$}};
                    \pgfmathsetmacro{\AngleETwo}{253};
                    \draw[line width=\Llabel pt] ($(0.530612668121931,0.747386591257611) + 0.1*({cos(\AngleETwo)},{sin(\AngleETwo)})$) -- ($(0.530612668121931,0.747386591257611) + 1.1*({cos(\AngleETwo)},{sin(\AngleETwo)})$) node[below,xshift=0.6mm,yshift=0.5mm] {\textcolor{yyellow}{$E_2$}};

                    \draw[rred, name path=sep1, postaction={decorate}, decoration={markings, mark= at position 0.87 with {\arrow{Classical TikZ Rightarrow}}}, line width=\L pt, line join = round, line cap = round] (0,0) .. controls (1,-0.3) .. (3.3,-0.06);
                    \draw[rred, name path=sep2, line width=\L pt, line join = round, line cap = round] (0,0) .. controls (-0.15,0.9) .. (0.15,1.8);
                    \fill (in1) circle (\P pt);
                    \fill (in2) circle (\P pt);
                    \fill[yyellow] (in3) circle (\P pt);
                    \fill[yyellow] (in4) circle (\P pt);
                    \fill (in5) circle (\P pt);
                    \fill (in6) circle (\P pt);
                    \fill (in7) circle (\P pt);
                    \fill (in8) circle (\P pt);
                    \fill[yyellow] (0.890285394129103,0.408775954806926) circle (\P pt);
                    \fill[yyellow] (0.530612668121931,0.747386591257611) circle (\P pt);
                    \node at (2.5,0.85) [right] {\textcolor{yyellow}{$J$}};
                    \node at (0.05,0.98) [right] {$\Lambda$};
                    \node at (1.11,0.25) [right] {\textcolor{vviol}{$\Lambda_1$}};
                    \node at (0.14,1.31) [right] {\textcolor{vviol}{$\Lambda_2$}};
                    \node at (3.34,-0.07) [above] {\textcolor{rred}{$\Gamma_1$}};
                    \node at (-0.146,1.82) [right] {\textcolor{rred}{$\Gamma_2$}};
                    \node at (0.8,0.07) [above] {\textcolor{ggreen}{$\hat{\Xi}$}};
                    \fill (0,0) circle (2pt);
                \end{axis}
		      \end{tikzpicture}
		\end{center}
		\caption{Geometrical visualization of the construction in Step 3 of the proof of Theorem \ref{thm:separatrices_sectors}, for the case where the local elliptic sector (light blue) in the multiple equilibrium $a$ (black) has counterclockwise direction. The gray paths are transversals through $x$ and $y$ (both black), respectively. $\Gamma_1$ and $\Gamma_2$ (red) are the separatrices. $\Lambda_1$ and $\Lambda_2$ (purple) are the transversals of the local elliptic sector. $\Xi=\Gamma(E_1)$ (green) is the homoclinic sector-forming orbit. The interior of the closed curve $J$ (yellow) is simply connected.}
		\label{fig:geom_visual_elliptic_sector_approximation}
    \end{figure}

    \proofpart{4}{Estimating the transit time on $\Gamma_+(x)$ for the case $x\in\Gamma_+(p_1)\subset\partial\mathcal{S}\cap\Ext(\kappa)$}
    
    By using the results in Step 3, $F$ has no zeros in $\Int(J)$. Hence, we get
    \begin{align*}
        \tau(\xi_1,\xi_2)=\int\limits_{\mathclap{\Lambda(\xi_1,\xi_2)}}\frac{1}{F}\,\mathrm{d}z=\underbrace{\int\limits_{\mathclap{J}}\frac{1}{F}\,\mathrm{d}z}_{=0}-\int\limits_{\mathclap{\Psi_1}}\frac{1}{F}\,\mathrm{d}z-\underbrace{\int\limits_{\mathclap{\hat{\Xi}(E_1,E_2)}}\frac{1}{F}\,\mathrm{d}z}_{=\tau(E_1,E_2)}+\int\limits_{\mathclap{\Psi_2}}\frac{1}{F}\,\mathrm{d}z\text{.}
    \end{align*}
    and thus with the homotopy version of Cauchy's Integral Theorem
    \begin{align*}
        |\tau(\xi_1,\xi_2)|\le\underbrace{\text{len}(\Psi_1)}_{\le L_1}\;\underbrace{\max_{\mathclap{z\in\Psi_1}}\frac{1}{|F(z)|}}_{\le\frac{1}{b_1}}+\left|\tau(E_1,E_2)\right|+\underbrace{\text{len}(\Psi_2)}_{\le L_2}\;\underbrace{\max_{\mathclap{z\in\Psi_2}}\frac{1}{|F(z)|}}_{\le\frac{1}{b_2}}\le M\text{.}
    \end{align*}
    As in \eqref{eq:node_focus_estimation}, it follows
    \begin{align*}
        |\tau(x,y)|&\le\tau(x^\prime,y^\prime)+ \varepsilon\le|\tau(\xi_1,\xi_2)|+\varepsilon\le M+\varepsilon\text{.}
    \end{align*}
    As in the proof of Theorem \ref{thm:separatrices_node}, we realize that the upper bound $M$ does not depend on the choice of $y$. Hence, since $\varepsilon$ is arbitrary, we conclude
    \begin{align*}
		\sup_{\mathclap{y\in\Gamma_+(x)}}\;\tau(x,y)\le M<\infty\text{.}
    \end{align*}
    By Lemma \ref{lem:separatrix_transit_times} (ii), we conclude that $\Gamma_1$ is indeed a positive separatrix.
    
    \proofpart{5}{The case $x\in\Gamma_-(p_2)\subset\partial\mathcal{S}\cap\Ext(\kappa)$}
    
    This case can be treated analogous to the case $x\in\Gamma_+(p_1)$. A similar argumentation as in Step 3 and Step 4 leads again to the estimation
    \begin{align*}
		\sup_{\mathclap{y\in\Gamma_-(x)}}\;\tau(x,y)\ge -M>-\infty\text{.}
    \end{align*}
    By Lemma \ref{lem:separatrix_transit_times} (iii), we verify that $\Gamma_2$ is a negative separatrix.
    
    \proofpart{6}{The case $x\in C_n$ with $n\in\mathcal{Q}$}
    
    If $x\in C_n$ with $n\in\mathcal{Q}$, we can apply Lemma \ref{lem:separatrix_transit_times} (i). In fact, for two arbitrarily chosen points $\eta,\zeta\in C_n$, we can apply Proposition \ref{prop:boundary_approximation} to construct a closed Jordan curve $\tilde{J}\subset\mathcal{S}$, which approximates the part of the orbit $C_n$ from $\eta$ to $\zeta$ and partially runs along $\kappa$, cf. Figure \ref{fig:geom_visual_elliptic_sector_approximation}. Hence, we get $|\tau(\eta,\zeta)|\le M$. Additionally, since $M$ is independent 
    of $n$, $\eta$ and $\zeta$, we can apply Lemma \ref{lem:transit_time_supremum} to conclude
    \begin{align*}
        \tau(C_n)=\sup_{\mathclap{\eta,\zeta\in C_n}}\;\tau(\eta,\zeta)\le M\text{.}
    \end{align*}
    Thus, $C_n$ is indeed a double-sided separatrix.
\end{proof}

Several illustrative examples can be found in \cite{broughan2003structure}. In particular, in \cite[Remark 4.4]{broughan2003structure}, the author presents an example with infinitely many separatrices, corresponding to the case $|\mathcal{Q}|=\infty$. We now present two further examples with interesting and noteworthy separatrix configurations.

\begin{example}\label{ex:F_alpha}
    We consider the polynomial vector field $F_\alpha:\C{}\to\C{}$ given by
    \begin{align*}
        F_\alpha(x):=e^{\mathrm{i}\alpha}(x-1)^2(x+1)^2
    \end{align*}
    with $\alpha\in[0,\pi)$. We have the two double equilibria $F_\alpha^{-1}(\{0\})=\{1,-1\}$, each possessing two global elliptic sectors, cf. Theorem \ref{thm:sector_number}. The term $e^{\mathrm{i}\alpha}$ rotates the direction of the vector field without changing the position of the equilibria. The values $\alpha \in [\pi,2\pi)$ have the same effect for this vector field, but with the time direction of all orbits reversed, since $e^{\mathrm{i}(\alpha+\pi)}=-e^{\mathrm{i}\alpha}$ for all $\alpha\in[0,\pi)$.
    
    One may now ask how the separatrix configuration of the two equilibria changes depending on the choice of $\alpha$. Is there a double-sided separatrix that separates $\C$ and, consequently, the respective global elliptic sectors of the two equilibria? In order to answer this question, we display the phase portrait for four different values of $\alpha$ in Figure \ref{fig:example_polynomial_alpha}.
    
    It appears to be the case that, for the case $\alpha\in[0,\pi)\setminus\{\frac{\pi}{2}\}$, there are always six separatrices ($3$ positive and $3$ negative), all of which are attached to one of the two equilibria. The time directions (positive or negative) in which the separatrices approach the equilibria alternate and are determined by the function $\lambda:\mathcal{E}(F_\alpha,2)\to\{-1,1\}$, given by $\lambda(\theta):=\cos(\argg(F_\alpha^{(2)}(a))+\theta)$ with $a\in F_\alpha^{-1}(\{0\})$, cf. \cite[Proposition 4.3]{kainz2024local}. For $\alpha\in[0,\pi)\setminus\{\frac{\pi}{2}\}$, all heteroclinic orbits are rotated depending on $\alpha$, while still connecting the two equilibria. In the case $\alpha=\frac{\pi}{2}$, however, we obtain a different separatrix configuration: The heteroclinic orbits disappear and a double-sided separatrix through the point $\mathrm{i}$ occurs. This separatrix divides the complex plane into two components, each consisting of one of the two equilibria together with its attached orbits. All these attached orbits are either homoclinic orbits within global elliptic sectors or separatrices. Moreover, the number of separatrices is reduced by $1$. However, the number of blow-ups remains $6$, as $\Gamma(\mathrm{i})$ blows up in both time directions.

    \begin{figure}[ht]
       \centering
       \includegraphics[clip]{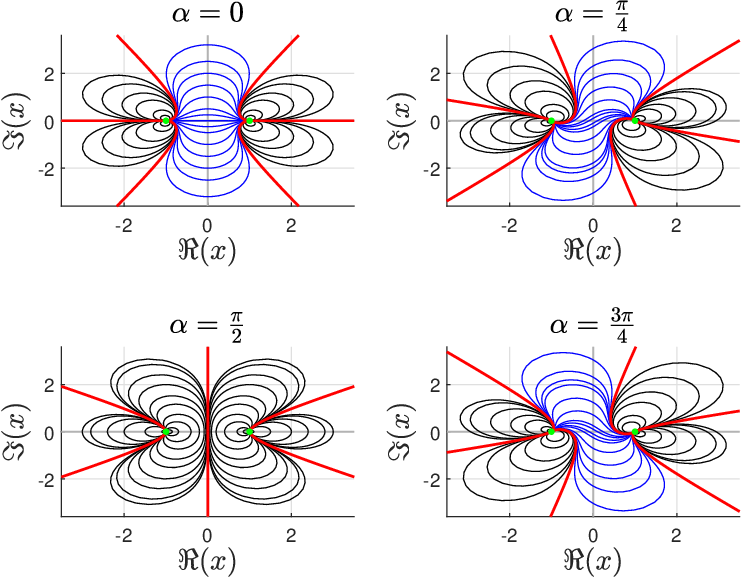}
       \caption{Local phase portrait of system \eqref{eq:planarODE} with $F_\alpha(x)=e^{\mathrm{i}\alpha}(x-1)^2(x+1)^2$ and $\alpha\in\{0,\frac{\pi}{4},\frac{\pi}{2},\frac{3\pi}{4}\}$, plotted with Matlab. The equilibria $F_\alpha^{-1}(\{0\})=\{1,-1\}$ are green. All orbits within the global elliptic sectors are black. The heteroclinic orbits are blue. The red trajectories are the separatrices on the boundary of the four elliptic sectors, cf. Theorem \ref{thm:separatrices_sectors}.}
       \label{fig:example_polynomial_alpha}
   \end{figure}

   \newpage
   
   From this analysis, we conclude that the separatrix configuration does not necessarily vary continuously under a continuous (or even holomorphic) perturbation of the vector field. It may happen that the separatrix configuration, and thus the global phase portrait, changes abruptly for specific values of $\alpha$.
\end{example}

\begin{example}
    We consider the polynomial vector field $F:\C{}\to\C{}$ given by
    \begin{equation*}
        F(x):=x^2(x-1)(x-\mathrm{i})(x-1-\mathrm{i})\text{.}
    \end{equation*}
    This vector field has already been analyzed in \cite[Example 5.8]{kainz2026geometry}. We have the equilibria $F^{-1}(\{0\})=\{0,1,\mathrm{i},1+\mathrm{i}\}$. The point $a_1=0$ is an equilibrium of order $2$, $a_2=1+\mathrm{i}$ is an attracting node, and $a_3=1$ as well as $a_4=\mathrm{i}$ are attracting foci. This leads to two global elliptic sectors in $a_1$ and three basins of attraction in $a_2$, $a_3$, and $a_4$, respectively. We illustrate the local phase portrait in Figure \ref{fig:example_polynomial}.
   
    \begin{figure}[ht]
       \centering
       \includegraphics[clip,scale=0.9]{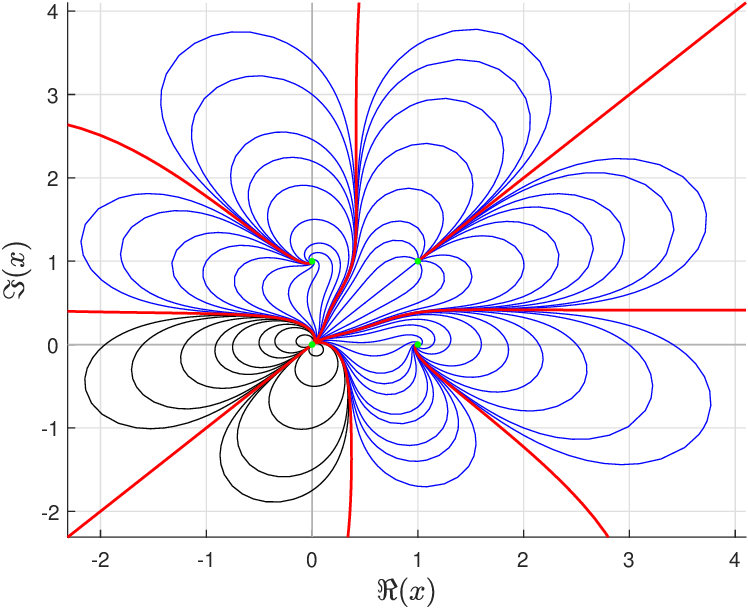}
       \caption{\cite[Fig. 5]{kainz2026geometry} Local phase portrait of system \eqref{eq:planarODE} with $F(x)=x^2(x-1)(x-\mathrm{i})(x-1-\mathrm{i})$, plotted with Matlab. The equilibria $F^{-1}(\{0\})=\{0,1,\mathrm{i},1+\mathrm{i}\}$ are green. All orbits within the basins of attraction are blue. All orbits within the global elliptic sectors are black. The red trajectories are the boundary orbits of the basins and sectors. The blue orbits within the basins are heteroclinic and connect $a_1$ and $a_j$, $j\in\{2,\ldots,4\}$.}
       \label{fig:example_polynomial}
   \end{figure}
   
    Since this example involves multiple basins of attraction as well as elliptic sectors, the corresponding separatrix configuration requires a more careful description. We denote the red boundary orbit through the point $-1-\mathrm{i}$ by $C_1$ and label the remaining red orbits $C_2,\ldots,C_8$, ordered cyclically in counterclockwise direction, cf. Figure \ref{fig:example_polynomial}.
    
    We first observe that the blue heteroclinic orbits define three \textit{heteroclinic regions} lying between $a_1$ and $a_j$ for $j\in\{2,\ldots,4\}$. These sets are specific canonical regions in the sense of \cite{markus1954global} and were introduced in \cite[Chapter 5.1]{kainz2026geometry}. They contain all heteroclinic orbits connecting one of the simple equilibria with $a_1$. The existence of the four red orbits $C_4,\ldots,C_7$ follows from \cite[Theorem 5.7]{kainz2026geometry}: indeed, we showed there that the heteroclinic regions are simply connected. As the equilibria lie on their boundaries, each region necessarily has a red boundary orbit that is not heteroclinic. A priori, and without using the theory developed in this paper, the red orbits can be identified as unbounded separatrices in the sense of Markus \cite{markus1954global} and Neumann \cite{neumann1975classification}, forming the boundary of these canonical (heteroclinic) regions. This naturally raises the question which of the red boundary orbits are also separatrices in the sense of Definition~\ref{def:separatrix}.
       
    The orbits $C_1$, $C_2$ and $C_8$ lie on the boundary of the two elliptic sectors. Hence, by Theorem \ref{thm:separatrices_sectors}, they blow up in finite time. As outlined in Example \ref{ex:F_alpha}, the time directions (positive or negative) in which the separatrices approach $a_1$ alternate. A straightforward computation yields $F(-1-\mathrm{i})=20+20\mathrm{i}$. Hence, $C_1$ is a negative and $C_2$ and $C_8$ are positive separatrices.
        
    With the methods developed in this paper, we cannot determine whether the red orbits $C_3,\ldots,C_7$ also blow up in finite time, since they do not lie on the boundary of the basins of attraction. They are located only on the boundary of the three heteroclinic regions. At this point, we conjecture that unbounded orbits on the boundary of heteroclinic regions -- and in particular the red orbits $C_3,\ldots,C_7$ -- are also separatrices in the sense of Definition \ref{def:separatrix}. A detailed investigation of this conjecture is left for future work.
\end{example}

\begin{appendices}
\section{}

\begin{proof}[Proof of Lemma \ref{lem:transit_time_supremum}]
	Fix $x\in\Gamma$. Since $\Gamma$ is not periodic, the function $\varphi_x:I(x)\to\Gamma$, $\varphi_x(t):=\Phi(t,x)$, is a bijection. The inverse function is given by $\varphi^{-1}_x(y)=\tau(x,y)$, $y\in\Gamma$. By using this, for all $y\in\Gamma$ there exists $t_y:=\tau(x,y)\in I(x)$ such that $\varphi_x(t_y)=y$. Hence we get
	\begin{align*}
		\tau(\Gamma)=\lambda(I(x))\ge\lambda\left([0,|t_y|]\right)=|\tau(x,y)|\text{.}
	\end{align*}
	Since $x$ is arbitrary, we conclude the inequality
	\begin{align}\label{eq:transit_time_supremum_inequality}
		\tau(\Gamma)\ge\sup_{\mathclap{x,y\in\Gamma}}\;\tau(x,y)\text{.}
	\end{align}
	Suppose, the inequality \eqref{eq:transit_time_supremum_inequality} is strict. First, we assume that $\tau(\Gamma)<\infty$, i.e. the maximum interval of existence of $\Gamma$ is bounded in $\R{}$. By assumption, there exists $\varepsilon>0$ such that for all $x,y\in\Gamma$ we have $\tau(\Gamma)-\varepsilon>\tau(x,y)=\varphi^{-1}_x(y)$. For fixed $z\in\Gamma$ there exist $\alpha<0$ and $\beta>0$ such that $I(z)=(\alpha,\beta)$. Choose $x:=\varphi_z\left(\alpha+\frac{\varepsilon}{2}\right)\in\Gamma$ and $x:=\varphi_z\left(\beta-\frac{\varepsilon}{2}\right)\in\Gamma$. Since the flow defines a dynamical system\footnote{cf. \cite[Chapter 3.1, Definition 1]{perko2001differential}.}, we conclude the contradiction
	\begin{align*}
		\varphi^{-1}_{x}(y)=\varphi^{-1}_{x}(z)+\varphi^{-1}_{z}(y)=-\left(\alpha+\frac{\varepsilon}{2}\right)+\beta-\frac{\varepsilon}{2}=\tau(\Gamma)-\varepsilon>\varphi^{-1}_{x}(y)\text{.}
	\end{align*}
	Thus, such a $\varepsilon$ does not exist and the inequality is not strict in the case $\tau(\Gamma)<\infty$. Assume now $\tau(\Gamma)=\infty$ and define
	\begin{align*}
		\zeta:=\sup_{\mathclap{x,y\in\Gamma}}\;|\tau(x,y)|\text{.}
	\end{align*}
	By assumption, $0\le \zeta<\infty$. Fix $x\in\Gamma$. Since $I(x)$ is unbounded and connected, there exists $t\in\{\zeta+1,-(\zeta+1)\}\cap I(x)\not=\emptyset$. But now we cleary have 
	\begin{align*}
		|\tau(x,\varphi_x(t))|=|t|=\zeta+1>\zeta\text{.}
	\end{align*}
	Hence $\varphi_x(t)\not\in\Gamma$, which is a contradiction to the fact that $\varphi_x$ is a surjection. All in all, $\zeta=\infty$ and the inequality \eqref{eq:transit_time_supremum_inequality} is not strict also in this case.
\end{proof}

\end{appendices}

\section*{Declarations}

\textbf{Competing interests:} The authors have no competing interests to declare.\\[1em]
\textbf{Funding:} We did not receive external funding for this work.

\section*{Acknowledgment}

We thank Alvaro Alvarez–Parrilla and Jes\'{u}s Muci\~{n}o-Raymundo for valuable discussions. We thank the referee for helpful comments and suggestions. We thank Francisco Fern\'{a}ndez for helpful email correspondence concerning the proof of Proposition \ref{prop:countable_orbits}.




\end{document}